\newtheorem{theorem}{Theorem}[section]
\newtheorem{proposition}{Proposition}[section]
\newtheorem{lemma}{Lemma}[section]
\newtheorem{remark}{Remark}[section]
\newtheorem{definition}{Definition}[section]
\newtheorem{proof}{\textmd{\textit{Proof.}}}
\newcommand{\qedd}{\hfill \square}
\newcommand{\ve}{\varepsilon}
\newcommand{\vez}{\varepsilon}
\newcommand{\del}{\partial}
\newcommand{\lra}{\longrightarrow}
\def\lf{\left}
\def\r{\right}
\newcommand{\R}{\ensuremath{\mathbb{R}}}
\newcommand{\fB}{\ensuremath{\mathfrak{B}}}
\newcommand{\bs}{\ensuremath{\mathbf{s}}}
\newcommand{\bt}{\ensuremath{\mathbf{t}}}
\newcommand{\bK}{\ensuremath{\mathbf{K}}}
\newcommand{\sA}{\ensuremath{\mathsf{A}}}
\newcommand{\sB}{\ensuremath{\mathsf{B}}}
\newcommand{\sC}{\ensuremath{\mathsf{C}}}
\newcommand{\sI}{\ensuremath{\mathsf{I}}}
\newcommand{\sJ}{\ensuremath{\mathsf{J}}}
\newcommand{\sO}{\ensuremath{\mathsf{O}}}
\newcommand{\sR}{\ensuremath{\mathsf{R}}}
\def\td{\mathrm{d}}
\def\Ric{\mathop{\mathrm{Ric}}\nolimits}
\def\trace{\mathop{\mathrm{trace}}\nolimits}
\def\tr{\mathop{\mathrm{tr}}\nolimits}
\def\Fut{\mathop{\mathrm{Fut}}\nolimits}
\newcommand{\noz}{\nonumber}
\newcommand{\wz}[1]{\widetilde{#1}}
\def\hs{\hspace{.3cm}}
\title{Volume comparison theorems in Finsler spacetimes}
\author{Yufeng LU}
\date{\today}
\begin{document}

\maketitle

\begin{abstract}
In a $(1+n)$-dimensional Lorentz--Finsler manifold
with $N$-Bakry--\'Emery Ricci curvature bounded from below
where $N\in(n,\infty]$, using the Riccati equation techniques,
we establish the Bishop--Gromov volume comparison theorem for the
so-called standard sets for comparisons in Lorentzian volumes (SCLVs).
We also establish the G\"unther volume comparison theorem for SCLVs when
the flag curvature is bounded above.
\end{abstract}

\tableofcontents
\section{Introduction}
Comparison theorems are classical subjects in Lorentzian geometry
in connection with, for example, physical convergence conditions
(nonnegative Ricci curvature), Raychaudhuri equations and
singularity theorems. Recently Lorentzian comparison theory
is attracting growing interests also from the synthetic
geometric viewpoint, we refer to \cite{AB,KuSa} for triangle
comparison theorems, and to \cite{Mc,MS} for connections with
optimal transport theory and the curvature-dimension condition.

Motivated by them, we initiated comparison theory in the weighted
Lorentz--Finsler setting in \cite{LMO}, where we studied
Raychaudhuri equations and singularity theorems.
This article is a continuation and extend some volume comparison
theorems to the Lorentz--Finsler case. The Bishop--Gromov comparison
theorem plays an important role in global analysis, especially
in comparison geometry of Riemannian manifolds. The original
version of the theorem assumes a lower bound for the Ricci curvature
of a Riemannian manifold. (See Chavel's textbook \cite{Ch},
for example.)
It has been developed by various papers in different aspects.
Generalizations of the Bishop--Gromov comparison theorem for weighted
Riemannian manifolds under the $N$-Bakry--\'Emery Ricci curvature condition
were completed by Qian \cite{Qian} for $N\in[n,\infty)$ and
considered by Wei and Wylie \cite{WW} for $N=\infty$
with some additional assumptions on the weight function.
Shen \cite{Sh} extended this comparison theorem
to the Finsler setting with unweighted Ricci curvature
and $S$-curvature conditions.
After Ohta \cite{Oint} introduced the weighted Ricci curvature
for Finsler manifolds, \cite{Oint} and \cite{Yin} gave proofs
to the Bishop--Gromov comparison theorems under weighted
Ricci curvature conditions for $N\in[n,\infty)$ and $N=\infty$
with some assumptions on the $S$-curvature
of the manifold, respectively. Moreover,
by using a differential inequality for an elliptic second
order differential operator acting on distance functions,
\cite{BQ} deduced Bishop--Gromov comparison theorems and diameter bounds
without use of the theory of Jacobi fields.
In the Lorentzian settings, Ehrlich, Jung and Kim \cite{EJK} studied the
Bishop--Gromov theorems for compact geodesic wedges
in globally hyperbolic spacetimes. Later on,
Ehrlich and S\'anchez \cite{ES} defined a more natural set on which
the assumption of global hyperbolicity was no more necessary in
either the Bishop--Gromov's or the G\"unther's volume comparisons.
For other generalizations, we refer to \cite{Lee, Zhu, Kim}
and \cite{Wei} for a survey on this topic.

Throughout this paper the function $\bs_\kappa$ is the solution to the
differential equation $f''+\kappa f=0$ with $f(0)=0$ and $f'(0)=1$, i.e.
\begin{align*}
\bs_\kappa(t):=\begin{cases}
\dfrac1{\sqrt{\kappa}}\sin\left(\sqrt{\kappa}t\right),&\textrm{for }\kappa>0;\\
t,&\textrm{for }\kappa=0;\\
\dfrac1{\sqrt{-\kappa}}\sinh\left(\sqrt{-\kappa}t\right),&\textrm{for }\kappa<0.
\end{cases}
\end{align*}
Our main theorems are as follows.
\begin{theorem}\label{thm:vc}
Let $(M,L,\rho)$ be a Finsler spacetime equipped with
a measure $\rho$ on $M$. Let $U_x$ be a
SCLV at $x\in M$. Assume that $\Ric_N(v)\ge c$ for some
$c\in\R$ and for all unit timelike radial vectors $v$,
where $N\in(n,\infty).$ If $\bt_{U_x}$ is
constant on $U_1$, then we have
\begin{align*}
\frac{\rho\left(U_x(r)\right)}{\rho\left(U_x(R)\right)}
\ge\frac{\int_0^{rT_x}\bs_{c/N}^N(t)\,dt}{\int_0^{RT_x}\bs_{c/N}^N(t)\,dt},
\end{align*}
for all $0<r\le R\le 1$, where $T_x=\bt_x$ if $c\le0$;
$T_x=\min\{\bt_x,\pi\sqrt{\frac Nc}\}$ if $c>0$.
\end{theorem}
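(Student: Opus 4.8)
\noindent\textit{Proof strategy.} The plan is to pass to geodesic polar coordinates based at $x$, reduce the claim to a one-variable monotonicity property of the density of $\rho$ along the radial timelike geodesics, and then conclude with the classical Bishop--Gromov averaging argument. A SCLV $U_x$ is, by construction, contained in the region where $\exp_x$ restricted to the future timelike cone is a diffeomorphism and is swept out by the proper-time parametrised radial geodesics $\gamma_v$, $v\in U_1$. Writing $d\mu$ for the natural measure on $U_1$, the measure $\rho$ is represented in these coordinates as $\rho=\hat\rho(t,v)\,dt\,d\mu(v)$, where $\hat\rho(t,v)$ is the Jacobian of $\exp_x$ along $\gamma_v$ times the density of $\rho$ with respect to the Finsler volume; and since $\bt_{U_x}\equiv\bt_x$ on $U_1$, the rescaled sets are $U_x(r)=\{\gamma_v(t):v\in U_1,\ 0\le t\le r\bt_x\}$, so that
\begin{align*}
\rho\big(U_x(r)\big)=\int_{U_1}\bigg(\int_0^{r\bt_x}\hat\rho(t,v)\,dt\bigg)\,d\mu(v),\qquad 0<r\le 1.
\end{align*}

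The core step is a Riccati comparison for $\hat\rho$. Fix $v\in U_1$ and set $\theta(t):=\partial_t\log\hat\rho(t,v)$, the expansion of the radial congruence with respect to $\rho$. The weighted Raychaudhuri equation for Lorentz--Finsler spacetimes from \cite{LMO}, combined with the Cauchy--Schwarz inequality separating the $n$-dimensional transverse part from the weight part --- where the hypothesis $N>n$ is used --- yields
\begin{align*}
\theta'(t)\le-\frac{\theta(t)^2}{N}-\Ric_N\big(\dot\gamma_v(t)\big)\le-\frac{\theta(t)^2}{N}-c.
\end{align*}
The model quantity $\eta(t):=N\,\bs_{c/N}'(t)/\bs_{c/N}(t)=\partial_t\log\bs_{c/N}^N(t)$ solves the Riccati \emph{equation} $\eta'=-\eta^2/N-c$, and as $t\to0^+$ one has $\theta(t)=n/t+O(1)$ whereas $\eta(t)=N/t+O(t)$, so $\theta-\eta\to-\infty$; since $N>n$ supplies the room needed at the origin, the standard Riccati comparison gives $\theta\le\eta$ on the common interval, i.e. $t\mapsto\hat\rho(t,v)/\bs_{c/N}^N(t)$ is non-increasing. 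When $c>0$ the comparison moreover forces a conjugate point of $\gamma_v$ at some parameter $\le\pi\sqrt{N/c}$; as a SCLV contains no such conjugate point, $\bt_x\le\pi\sqrt{N/c}$, hence $T_x=\bt_x$ in every case and the limits $\int_0^{r\bt_x}$ above coincide with the $\int_0^{rT_x}$ of the statement.

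To finish I would invoke the elementary lemma that if $p\ge0$, $q>0$ and $p/q$ is non-increasing on $[0,RT_x]$, then $\big(\int_0^{rT_x}p\big)\big(\int_0^{RT_x}q\big)\ge\big(\int_0^{RT_x}p\big)\big(\int_0^{rT_x}q\big)$ for $0<r\le R\le1$ --- seen by writing the difference as the integral of $p(s)q(t)-p(t)q(s)$ over $[0,rT_x]\times[0,RT_x]$, symmetrising the contribution of $[0,rT_x]^2$ to zero, and using $p(s)/q(s)\ge p(t)/q(t)$ for $s\le t$ on the remaining rectangle. Applying it for each $v\in U_1$ with $p=\hat\rho(\cdot,v)$ and $q=\bs_{c/N}^N$, then integrating in $v$ against $d\mu$ and using the first display, we get $\rho(U_x(r))\int_0^{RT_x}\bs_{c/N}^N\ge\rho(U_x(R))\int_0^{rT_x}\bs_{c/N}^N$, and dividing by the positive quantities $\rho(U_x(R))$ and $\int_0^{RT_x}\bs_{c/N}^N$ gives the assertion.

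The step I expect to be the main obstacle is the Riccati comparison of the second paragraph: deriving the weighted Raychaudhuri inequality genuinely within the Lorentz--Finsler category --- where curvature and covariant differentiation along $\gamma_v$ are taken with reference vector $\dot\gamma_v$ and the weight contributes through the $S$-curvature --- and then running the comparison through the singular behaviour at $t=0$, where it is precisely the strict inequality $n<N$ that both makes the Cauchy--Schwarz splitting valid and gives the comparison its starting point.
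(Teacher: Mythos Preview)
Your proposal is correct and follows essentially the same route as the paper. Both arguments pass to polar coordinates on the SCLV, reduce to a one-variable comparison between the radial density $\hat\rho(t,v)=e^{-\psi_{\eta_v}(t)}\det\sA_v(t)$ and the model $\bs_{c/N}^N$, and finish with Gromov's integral lemma (the paper's Lemma~3.1, which is exactly the averaging lemma you sketch). The only cosmetic differences are: (i) the paper packages the core inequality as a second-order estimate $Nh''+ch\le 0$ for $h:=\hat\rho^{1/N}$ (Proposition~3.1, proved via the splitting $h=h_1^{(N-n)/N}h_2^{n/N}$) and then runs a Wronskian/Sturm argument through $t=0$, whereas you phrase the same content as the Riccati inequality $\theta'\le-\theta^2/N-c$ together with the asymptotics $\theta\sim n/t$, $\eta\sim N/t$; and (ii) you make explicit the Bonnet--Myers consequence that, under $\Ric_N\ge c>0$, the diffeomorphism property of a SCLV forces $\bt_x\le\pi\sqrt{N/c}$ and hence $T_x=\bt_x$ in every case --- the paper leaves this implicit, simply writing the final bound with $\bt_x$ in the limits of integration.
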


We also refer to the flag curvature $\bK$ in Definition \ref{df:curv}
and the weight function $\psi$ in Definition \ref{def:RicN}.
The G\"unther volume theorem could be stated as follows
with some notations introduced in the end of Section \ref{sc:BG}.

\begin{theorem}\label{thm:vc-flag}
Let $(M,L,\rho)$ be a Finsler spacetime equipped with a measure $\rho$ on $M$. Let $U_x$ be a SCLV
at $x\in M$. Assume that the flag curvature $\bK(\pi)\le -c$
for some $c\ge0$ and for all radially timelike planes $\pi$.
If $\psi\le k$, then
\begin{align*}
\rho(U_x)\ge e^{-k}\sigma\lf(\wz U_1\r)\int_0^{\bt_x}\bs_{-c}^n(t)\td t,
\end{align*}
where $\sigma$ is the area measure on $\Fut_1(x)$ induced
from $\rho$.
\end{theorem}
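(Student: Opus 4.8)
The plan is to express $\rho(U_x)$ in polar (geodesic) coordinates around $x$, reduce the inequality to a pointwise lower bound for the volume density along each radial timelike geodesic, and prove that bound by a Riccati comparison. In this reduction the flag-curvature hypothesis $\bK(\pi)\le-c$ controls the Jacobian from below, while $\psi\le k$ controls the weight factor by $e^{-k}$.

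First I would recall from the end of Section~\ref{sc:BG} that, for a SCLV $U_x$, the map $\exp_x$ is a diffeomorphism from the cone $\{tv:\ v\in\wz U_1,\ 0\le t\le\bt_{U_x}(v)\}\subseteq T_xM$ onto $U_x$, so that
\begin{align*}
\rho(U_x)=\int_{\wz U_1}\lf(\int_0^{\bt_{U_x}(v)}\mathfrak{A}_v(t)\,\td t\r)\td\sigma(v),
\end{align*}
where $\td\sigma$ is the area measure induced by $\rho$ on $\Fut_1(x)$ and $\mathfrak{A}_v(t)\ge0$ is the density at $\exp_x(tv)$ of the pull-back of $\rho$ by $\exp_x$ along the unit timelike radial geodesic $\gamma_v$ with $\dot\gamma_v(0)=v$. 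Since $\bt_x\le\bt_{U_x}(v)$ for every $v\in\wz U_1$, it suffices to prove the pointwise estimate $\mathfrak{A}_v(t)\ge e^{-k}\bs_{-c}^n(t)$ for $v\in\wz U_1$ and $0<t\le\bt_x$; inserting it and integrating then gives
\begin{align*}
\rho(U_x)\ge\int_{\wz U_1}\int_0^{\bt_x}e^{-k}\bs_{-c}^n(t)\,\td t\,\td\sigma(v)=e^{-k}\,\sigma(\wz U_1)\int_0^{\bt_x}\bs_{-c}^n(t)\,\td t.
\end{align*}

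For the pointwise estimate I would factor $\mathfrak{A}_v(t)=e^{-\psi(\dot\gamma_v(t))}\,\hat{\mathcal{A}}_v(t)$ into the weight $e^{-\psi}$ and the unweighted Jacobian $\hat{\mathcal{A}}_v$ of $\exp_x$ along $\gamma_v$, normalised so that $\hat{\mathcal{A}}_v(t)/t^n\to1$ as $t\to0^+$ (consistently with $\td\sigma$); then $\psi\le k$ gives $\mathfrak{A}_v(t)\ge e^{-k}\hat{\mathcal{A}}_v(t)$, and it remains to show $\hat{\mathcal{A}}_v(t)\ge\bs_{-c}^n(t)$ on $(0,\bt_x]$. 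Here the Riccati equation enters: along $\gamma_v$ the normal bundle carries the positive-definite inner product $g_{\dot\gamma_v}$, the $g_{\dot\gamma_v}$-self-adjoint shape operator $B_v$ of the radial level sets obeys $B_v'+B_v^2+R_{\dot\gamma_v}=0$ with $B_v(t)\sim t^{-1}\mathrm{Id}$ as $t\to0^+$, and $(\log\hat{\mathcal{A}}_v)'=\tr B_v$. The bound $\bK(\pi)\le-c$ on all radially timelike planes translates, via Definition~\ref{df:curv}, into exactly the operator bound on $R_{\dot\gamma_v}$ needed to run the scalar-versus-matrix Riccati comparison against the model solution $b=\bs_{-c}'/\bs_{-c}$ of $b'+b^2-c=0$, $b(t)\sim t^{-1}$; this yields $B_v(t)\ge b(t)\,\mathrm{Id}$, hence $\tr B_v(t)\ge n\,\bs_{-c}'(t)/\bs_{-c}(t)$, and integrating from $0$ with the matched asymptotics $\hat{\mathcal{A}}_v(t)\sim t^n\sim\bs_{-c}^n(t)$ gives $\hat{\mathcal{A}}_v(t)\ge\bs_{-c}^n(t)$.

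The main difficulty is this comparison in the Lorentz--Finsler setting. One must work consistently along $\gamma_v$ with the induced Riemannian structure $g_{\dot\gamma_v}$ on the spacelike normal bundle, track the Lorentzian sign convention so that an \emph{upper} bound on the timelike flag curvature becomes precisely the inequality forcing $B_v$ (equivalently the Jacobi tensor) to dominate the model, and verify — using the SCLV hypotheses together with the fact that $\bs_{-c}$ has no zero for $t>0$ (as $c\ge0$, so the model has no conjugate point) — that $\exp_x$ is nonsingular along $\gamma_v$ for $0<t\le\bt_x$, so that $B_v$ stays finite and the comparison is valid on the whole interval. Given this, the weight estimate $\psi\le k\Rightarrow e^{-\psi}\ge e^{-k}$ and the final integration over $\wz U_1$ are routine.
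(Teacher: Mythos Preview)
Your plan is correct and reaches the same key inequality as the paper, $(\log\det\sA_v)'(t)\ge n\,\bs_{-c}'(t)/\bs_{-c}(t)$, after which the weight bound $\psi\le k$ and the integration over $\wz U_1$ proceed identically. The path to that inequality differs, however. The paper does not invoke a matrix Riccati comparison for $B_v=\sA_v'\sA_v^{-1}$; instead it diagonalizes $\sB_v:=\sA_v^{T}\sA_v$ at each fixed time, rewrites $\tr(\sB_v'\sB_v^{-1})=\sum_i(g_{\dot\eta}(Y_i,Y_i))'/g_{\dot\eta}(Y_i,Y_i)$ for suitable Jacobi fields $Y_i$, proves the scalar second-order inequality $(\sqrt{g_{\dot\eta}(Y_i,Y_i)})''\ge c\sqrt{g_{\dot\eta}(Y_i,Y_i)}$ from the flag-curvature bound together with Cauchy--Schwarz, and then compares each $\sqrt{g_{\dot\eta}(Y_i,Y_i)}$ with $\bs_{-c}$ by a Sturm-type argument. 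Your operator-theoretic route is more streamlined and yields the stronger intermediate conclusion $B_v\ge(\bs_{-c}'/\bs_{-c})\,\sI_n$ before taking traces, but it requires as input the matrix Riccati comparison lemma and the $g_{\dot\eta}$-self-adjointness of $R_{\dot\eta}$ on the normal bundle (so that $\bK\le-c$ genuinely becomes the operator bound $R_{\dot\eta}\le-c\,\sI_n$); the paper's argument avoids these by staying with scalar ODE comparison, at the price of the diagonalization step. One minor point: nonsingularity of $\exp_x$ on $(0,\bt_{U_x}(v))$ is already part of the SCLV definition (item~(4)) and does not need the model to be conjugate-point free.
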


In the next section, we give some preliminaries of Lorentz--Finsler
manifolds. Theorems \ref{thm:vc} and \ref{thm:vc-flag}
are shown in Sections \ref{sc:BG} and \ref{sc:Gn}, respectively.
We also consider the case $N=\infty$ in Section \ref{sc:Ninf}
along the lines of \cite{WW}.


\section{Lorentz--Finsler manifolds}\label{sc:LF-mfd}
Let $M$ be a connected paracompact $C^\infty$-manifold of dimension $1+n$. We also refer to \cite{BCS,Shlec} for
Finsler geometry in the positive-definite case.
\subsection{Metric}
In this article, Beem's definition of Lorentz--Finsler manifolds \cite{Be}
is employed.
\begin{definition}[Lorentz--Finsler structure]\label{df:FLstr}
A \emph{Lorentz--Finsler structure} of $M$ will be a function
$L\colon TM \lra \R$ satisfying the following conditions:
\begin{enumerate}[(1)]
\item $L \in C^{\infty}(TM \setminus \{0\})$;
\item $L(cv)=c^2 L(v)$ for all $v \in TM$ and $c>0$;
\item For any $v \in TM \setminus \{0\}$, the symmetric matrix
\begin{equation}\label{eq:g_ij}
g_{\alpha \beta}(v) := \frac12\frac{\del^2 L}{\del v^\alpha \del v^\beta}(v),
 \quad \alpha,\beta=0,1,\ldots,n,
\end{equation}
is non-degenerate with signature $(-,+,\ldots,+)$.
\end{enumerate}
A pair $(M,L)$ is then said to be a \emph{Lorentz--Finsler manifold}
or a \emph{Lorentz--Finsler space}.
\end{definition}
Let $x\in M$ and $v\in T_xM\setminus\{0\}$. The matrix \eqref{eq:g_ij} induces a Lorentzian metric $g_v$ called the \emph{fundamental tensor} by
\begin{align*}
g_v\left(\left.\sum_{\alpha=0}^na^\alpha\frac\partial{\partial x^\alpha}
\right|_x,\left.\sum_{\beta=0}^nb^\beta\frac\partial{\partial x^\beta}
\right|_x\right)=\sum_{\alpha,\beta=0}^na^\alpha b^\beta g_{\alpha\beta}(v).
\end{align*}
Similar to the positive-definite case, the tensor $g_v$ for
$v\in TM\setminus\{0\}$ satisfies that
\begin{align}
g_v(v,v)=\sum_{\alpha,\beta=0}^nv^\alpha v^\beta g_{\alpha\beta}(v)=L(v).
\end{align}
\subsection{Causality}\label{ssc:causality}
\begin{definition}\label{df:time}
Let $(M,L)$ be a Lorentz--Finsler manifold.
A vector $v \in TM $ is said to be a \emph{timelike vector} if $L(v)<0$
and a \emph{null vector} if $L(v)=0$. A vector $v$ is said
to be \emph{lightlike} if it is null and non-zero. The
\emph{spacelike vectors} are those for which $L(v)>0$ or $v=0$.
The \emph{causal} (or \emph{non-spacelike}) vectors are those
which are lightlike or timelike ($L(v) \le 0$ and $v \neq 0$).
The set of timelike vectors will be denoted by
\begin{align*}
\Omega'_x:=\{ v \in T_xM \,|\, L(v)<0 \},
 \qquad \Omega' :=\bigcup_{x \in M} \Omega'_x.
\end{align*}
\end{definition}

\begin{definition}
Let $(M,L)$ be a Lorentz--Finsler manifold.
A continuous vector field $X$ on $M$ is said to be
\emph{timelike} if $L(X(x))<0$ for all $x \in M$.
If $(M,L)$ admits a timelike smooth vector field $X$,
then $(M,L)$ is said to be \emph{time oriented} by $X$,
or simply \emph{time oriented}. A time oriented
Lorentz--Finsler manifold is then said to be a \emph{Finsler spacetime}.
\end{definition}

Let $X_M$ be a fixed timelike smooth vector field.
A causal vector $v \in T_xM$ is said to be
\emph{future-directed} (with respect to $X_M$)
if it lies in the same connected component of
$\overline{\Omega'_x} \setminus \{0\}$ as $X_M(x)$.
Denote by $\Omega_x \subset \Omega'_x$ the set of all
future-directed timelike vectors that is a connected component of $\Omega_x'$ and make the
following notations,
\begin{align*}
\Omega :=\bigcup_{x \in M} \Omega_x, \qquad
 \overline{\Omega} :=\bigcup_{x \in M} \overline{\Omega}_x, \qquad
 \overline{\Omega} \setminus \{0\} :=\bigcup_{x \in M} (\overline{\Omega}_x \setminus \{0\}).
\end{align*}
A $C^1$-curve in $(M,L)$ is said to be \emph{timelike} (resp.\
\emph{causal, lightlike, spacelike})
if its tangent vectors are always timelike
(resp.\ causal, lightlike, spacelike). A causal curve is said to be \emph{future-directed} if its tangent vectors are always future-directed.

\begin{remark}
It is well-known that, in general, the number of connected components
of $\Omega_x'$ may be larger than 2, but in a reversible
Lorentz--Finsler manifold of dimension larger than 2, $\Omega_x'$
has exactly two connected components. See \cite{Be, Min3}.
In this special case, one may define past-directed vectors as well.
\end{remark}

Given distinct points $x,y \in M$, we write $x \ll y$ if there
is a future-directed timelike curve from $x$ to $y$. Similarly,
$x<y$ means that there is a future-directed causal curve from
$x$ to $y$, and $x \le y$ means that $x=y$ or $x<y$.
The \emph{chronological past} and \emph{future} of $x$ are
defined by
\begin{align*}
I^-(x):=\{y\in M\,|\,y\ll x\},\qquad I^+(x):=\{y\in M\,|\,x\ll y\},
\end{align*}
and the \emph{causal past} and \emph{future} by
\begin{align*}
J^-(x):=\{y\in M\,|\,y\le x\},\qquad J^+(x):=\{y\in M\,|\,x\le y\}.
\end{align*}
For a general set $S\subset M$, we define $I^+(S),I^-(S),J^+(S)$
and $J^-(S)$ analogously.
\begin{definition}[Causal convexity] Let $(M,L)$ be a Finsler spacetime.
An open set $U\subset M$ is said to be \emph{causally convex}
if no causal curve intersects $U$ in a disconncected set of its domain.
\end{definition}
Using these terminologies, several
causality conditions may be defined as follows.
\begin{definition}[Causality conditions]\label{df:causal}
Let $(M,L)$ be a Finsler spacetime.
\begin{enumerate}[(1)]
\item $(M,L)$ is said to be \emph{chronological}
if $x \notin I^+(x)$ for all $x\in M$.
\item We say that $(M,L)$ is \emph{causal} if there
is no closed causal curve.
\item $(M,L)$ is said to be \emph{strongly causal at
a point} $x\in M$, if $x$ has arbitrarily small causally
convex neighborhoods. $(M,L)$ is said to be \emph{strongly causal}
if it is strongly causal at every point $x\in M$.
\item We say that $(M,L)$ is \emph{globally hyperbolic}
if it is strongly causal and, for any $x,y \in M$, $J^+(x)
\cap J^-(y)$ is compact.
\end{enumerate}
\end{definition}
\begin{remark}
From the definitions, it is clear that strong causality implies
causality and a causal spacetime is chronological.
The chronological condition ensures that the spacetime is not compact.
Indeed, if we assume the contrary, since $\{I^+(x)\}_{x\in M}$ forms
an open cover of $M$, the existence of a finite subcover of $M$ and the fact that for any $x\in M$ we have $x\notin I^+(x)$ indicate that there must be a point in the future set of itself.
\end{remark}

\begin{remark}\label{rm:vc}
In volume comparison theorems in the positive-definite case, we usually
assume the completeness and compare the volumes of concentric balls.
In the Lorentzian case, however, since $\Fut_1(x)$ is noncompact and
``balls'' can have infinite volume, we need to restrict ourselves to a compact set
(like a SCLV). In \cite{EJK}, the authors assumed the global
hyperbolicity and the positivity of the injectivity radius within
relevant directions, however, the global hyperbolicity is not
fulfilled even by some standard examples like anti-de Sitter
spacetimes. Then the SCLV was introduced in \cite{ES} as a more direct
notion suitable for volume comparison theorems in the Lorentzian setting,
and we followed this line.
\end{remark}

The definition of
\emph{standard for comparisons of Lorentzian volumes
(SCLV)} was originally introduced in \cite{ES}.
(See Remark \ref{rm:vc} for more explanations.)

\begin{definition}[SCLV]
Let $(M,L)$ be a Finsler spacetime.
Let $x\in M$ and $U_x\subset M$ be a neighborhood of $x$. $U_x$ is said to be a SCLV at $x$ if there is a set $\wz U_x\subset T_xM$ satisfying that
\begin{enumerate}
\item $\wz U_x$ is an open set in the causal future of $x$,
(see Subsection \ref{ssc:causality} for the definition of
causal future);\label{sclv1}
\item $\wz U_x$ is \emph{star-shaped} from the origin, i.e. if $v\in\wz U_x$,
then $tv\in\wz U_x$, for all $t\in(0,1)$;\label{sclv2}
\item $\wz U_x$ is contained in a compact set in $T_xM$;\label{sclv3}
\item the exponential map at $x$ is defined on $\wz U_x$, and the restriction
of $\exp_x$ to $\wz U_x$ is a diffeomorphism onto its image
$U_x=\exp_x(\wz U_x)$.
\label{sclv4}
\end{enumerate}
\end{definition}
%


\subsection{Geodesics and curvatures}
Let $(M,L)$ be a Finsler spacetime. Let $\eta:[a,b] \lra M$
be a future-directed $C^1$-causal curve. Define the \emph{action}
\begin{align*}
\mathcal{S}(\eta):=\int_a^b L \big( \dot{\eta}(t) \big) \,\td t.
\end{align*}
The \emph{Euler--Lagrange equation} for $\mathcal{S}$ provides
the \emph{geodesic equation} (with the help of homogeneous
function theorem)
\begin{align}\label{eq:geod}
\ddot{\eta}^\alpha +\sum_{\beta,\gamma=0}^n
 \widetilde{\Gamma}^\alpha_{\beta \gamma}(\dot{\eta})
 \dot{\eta}^\beta \dot{\eta}^\gamma =0,
\end{align}
where we define
\begin{align}\label{eq:gamma}
\widetilde{\Gamma}^\alpha_{\beta \gamma}(v)
:=\frac{1}{2} \sum_{\delta=0}^n g^{\alpha \delta}(v)
 \bigg( \frac{\del g_{\delta \gamma}}{\del x^\beta}
 +\frac{\del g_{\beta \delta}}{\del x^\gamma}
 -\frac{\del g_{\beta \gamma}}{\del x^\delta} \bigg)(v)
\end{align}
for $v \in TM \setminus \{0\}$ and $(g^{\alpha \beta}(v))$
denotes the inverse matrix of $(g_{\alpha \beta}(v))$.
The equation \eqref{eq:geod} implies that $L(\dot{\eta})$
is constant.

\begin{definition}[Causal geodesics]\label{df:geod}
Let $\eta:[a,b]\lra M$ be a $C^{\infty}$-causal curve.
It is said to be a \emph{(future-directed) causal geodesic}
if \eqref{eq:geod} holds for all $t \in (a,b)$.
\end{definition}

\begin{remark}
Since $L(\dot{\eta})$ is constant, a causal geodesic is indeed
either a timelike geodesic or a lightlike geodesic.
By the basic ordinary differential equation theory,
given arbitrary $v \in \overline{\Omega}_x$,
there exists some $\ve>0$ and a unique $C^{\infty}$-geodesic
$\eta:(-\ve,\ve) \lra M$ satisfying $\dot{\eta}(0)=v$.
\end{remark}

\begin{definition}[Radial vector]
Let $(M,L)$ be a Finsler spacetime and $x\in M$.
A tangent vector $v\in T_xM$ is said to be \emph{radial} if there
is a geodesic $\eta:[0,T]\lra M$ with $\eta(T)=x$ such that
$v=\dot\eta(T)$. Let $U_x$ be a SCLV at $x$.
A tangent plane $\pi$ to $U_x$ is said to be \emph{radially timelike}
if it contains a timelike radial vector.
\end{definition}

\begin{definition}[Exponential map]\label{eq:exp}
Given $v \in \overline{\Omega}_x$, if there is a geodesic $\eta:[0,1] \lra M$ with $\dot{\eta}(0)=v$,
then we define $\exp_x(v):=\eta(1)$.
\end{definition}

We now define the \emph{geodesic spray coefficients}
and the \emph{nonlinear connection} as
\begin{equation}\label{eq:G&N}
G^\alpha(v) :=\sum_{\beta,\gamma=0}^n
 \widetilde{\Gamma}^\alpha_{\beta \gamma}(v) v^\beta v^\gamma, \qquad
N^\alpha_\beta(v) := \frac{1}{2} \frac{\del G^\alpha}{\del v^\beta}(v)
\end{equation}
for $v \in TM \setminus \{0\}$, and $G^\alpha(0)=N^\alpha_\beta(0):=0$.
Note that $G^\alpha$ is positively $2$-homogeneous
and $N^\alpha_\beta$ is positively $1$-homogeneous,
and $2 G^\alpha(v) =\sum_{\beta=0}^n N^\alpha_\beta(v)v^\beta$ holds
by the homogeneous function theorem.
The geodesic equation \eqref{eq:geod} is now written as
$\ddot{\eta}^\alpha +2 G^\alpha(\dot{\eta})=0$.
In order to define the covariant derivative,
we need to modify $\widetilde{\Gamma}^\alpha_{\beta \gamma}$ in \eqref{eq:gamma} as
\begin{equation}\label{eq:Gamma}
\Gamma^\alpha_{\beta \gamma}(v)
 :=\widetilde{\Gamma}^\alpha_{\beta \gamma}(v) -\frac{1}{2} \sum_{\delta,\mu=0}^n
 g^{\alpha \delta}(v) \bigg( \frac{\del g_{\delta \gamma}}{\del v^\mu} N^\mu_\beta
  +\frac{\del g_{\beta \delta}}{\del v^\mu} N^\mu_\gamma
 -\frac{\del g_{\beta \gamma}}{\del v^\mu} N^\mu_\delta \bigg)(v)
\end{equation}
for $v \in TM \setminus \{0\}$. Note that $\Gamma_{\beta\gamma}^\alpha$
could be used for defining the \emph{Chern connection} in Finsler
spacetimes.

\begin{definition}[Covariant derivative]\label{df:cov}
Let $X$ be a $C^1$-vector field on $M$,
$x \in M$ and $v,w \in T_xM$ with $w \neq 0$.
Define the \emph{covariant derivative} of $X$ by $v$ with
reference (support) vector $w$ by
\begin{equation}\label{eq:covd}
D^w_v X :=\sum_{\alpha ,\beta=0}^n \bigg\{ v^\beta \frac{\del X^\alpha}{\del x^\beta}(x)
 +\sum_{\gamma=0}^n \Gamma^\alpha_{\beta \gamma}(w) v^\beta X^\gamma(x) \bigg\}
 \frac{\del}{\del x^\alpha} \Big|_x.
\end{equation}
\end{definition}


\begin{definition}[Parallel vector field]
A vector field $V$ along a curve $\eta:I\longrightarrow M$ is said to be
\emph{$g_{\dot\eta}$-parallel} if
\begin{align*}
D_{\dot\eta}^{\dot\eta}V(t)=0,
\end{align*}
for all $t\in I$.
\end{definition}

As usual, one can define Jacobi fields
along a causal geodesic by considering a variation.
We here just employ the needful notations for completeness.

Define
\begin{align*}
R^\alpha_\beta(v) :=\frac{\del G^\alpha}{\del x^\beta}(v)
 -\sum_{\gamma=0}^n \lf[ \frac{\del N^\alpha_\beta}{\del x^\eta}(v) v^\gamma
 -\frac{\del N^\alpha_\beta}{\del v^\gamma}(v) G^\gamma(v) \r]
 -\sum_{\gamma=0}^n N^\alpha_\gamma(v) N^\gamma_\beta(v),
\end{align*}
for $v \in \overline{\Omega}$ ($R^\alpha_\beta(0)=0$).
Then, the \emph{Riemannian curvature} is defined as
\begin{equation}\label{eq:R_v}
R_v(w) :=\sum_{\alpha,\beta=0}^n R^\alpha_\beta(v) w^\beta \frac{\del}{\del x^\alpha}\Big|_x,
\end{equation}
for $v \in \overline{\Omega}_x$ and $w \in T_xM$.

\begin{definition}[Jacobi fields]\label{df:Jacobi}
Let $\eta:[a,b]\longrightarrow M$ be a causal geodesic.
A smooth vector field $Y:[a,b]\longrightarrow TM$ along
$\eta$ is said to be a \emph{Jacobi field} if $Y$ satisfies
the \emph{Jacobi equation},
\begin{align}\label{eq:Jacobi}
D^{\dot{\eta}}_{\dot{\eta}} D^{\dot{\eta}}_{\dot{\eta}} Y +R_{\dot{\eta}}(Y) =0.
\end{align}
\end{definition}

\begin{definition}[Conjugate points]\label{df:conj}
Let $\eta:[a,b] \lra M$ be a nonconstant causal geodesic.
If there is a nontrivial Jacobi field $Y$ along $\eta$ such that $Y(a) =Y(t) =0$ for some $t \in (a,b]$,
then we call $\eta(t)$ a \emph{conjugate point} of $\eta(a)$ along $\eta$.
\end{definition}

Equivalently, $\eta(t)$ is conjugate to $\eta(a)$ if
$\td(\exp_{\eta(a)})_{(t-a)\dot{\eta}(a)}:
T_{(t-a)\dot{\eta}(a)}(T_{\eta(a)}M) \lra T_{\eta(t)}M$
does not have full rank.

The flag and Ricci curvatures are defined by using $R_v$ in \eqref{eq:R_v} as follows.
The flag curvature corresponds to the sectional curvature in the Riemannian context.

\begin{definition}[Flag curvature]\label{df:curv}
For $v \in {\Omega_x} $ and $w \in T_xM$ linearly independent of $v$,
define the \emph{flag curvature} of the plane $v \wedge w$ spanned by $v,w$
with \emph{flagpole} $v$ as
\begin{equation}\label{eq:flag}
\bK(v,w) :=-\frac{g_v(R_v(w),w)}{g_v(v,v) g_v(w,w) -g_v(v,w)^2}.
\end{equation}
\end{definition}

We remark that this is the opposite sign to \cite{BEE},
while the Ricci curvature will be the same. Note that,
for $v$ timelike, the denominator in the right-hand side
of \eqref{eq:flag} is negative. The flag curvature is
not defined for $v$ lightlike, for in this case the
denominator could vanish. We define the Ricci curvature
directly as the trace of $R_v$ in \eqref{eq:R_v}.

\begin{definition}[Ricci curvature]\label{df:Ric}
For $v \in \overline{\Omega}_x \setminus \{0\}$,
the \emph{Ricci curvature} or \emph{Ricci scalar} is defined by
\begin{align*}
\Ric(v) :=\trace(R_{v}).
\end{align*}
\end{definition}

Since $\Ric(v)$ is positively $2$-homogeneous,
we can set $\Ric(0):=0$ by continuity.
We say that $\Ric \ge K$ holds \emph{in timelike directions} for some $K \in \R$
if we have $\Ric(v) \ge KF(v)^2 =-KL(v)$ for all $v \in \Omega$,
where $F(v):=\sqrt{-g_v(v,v)}=\sqrt{-L(v)}$.

For a normalized timelike vector $v \in \Omega_x$ with $F(v)=1$,
$\Ric(v)$ can be given as
\[ \Ric(v) =\sum_{i=1}^{n} \bK(v,e_i), \]
where $ \{v\} \cup \{e_i\}_{i=1}^{n}$ is an orthonormal basis with respect to $g_v$,
namely $g_v(e_i,e_j)=\delta_{ij}$ and $g_v(v,e_i)=0$ for all $i,j=1,\ldots,n$.

\begin{definition}[Weighted Ricci curvature]\label{def:RicN} 
Let $(M,L,\rho)$ be a Finsler spacetime with $\dim M=1+n$,
where $\rho$ is an arbitrary positive $C^\infty$ measure on $M$.
Given a smooth causal vector field $V$ such that all integral
curves are geodesic, we can always decompose $\rho$ in local coordinates as
\begin{align*}
\td\rho=e^{-\psi(V(x))}\sqrt{-\det\lf[\lf(g_{\alpha\beta}(V(x))
\r)_{\alpha,\beta=0}^n\r]}\td x_0\cdots\td x_n,
\end{align*}
where $\psi:TM\setminus\{0\}\lra\mathbb{R}$ is a positively 0-homogeneous smooth function called the \emph{weight function corresponding to the measure $\rho$}. 
For a causal geodesic $\eta(t)=\exp_x(tv)$ with $t\in(-\vez,\vez)$
and $v\in\overline{\Omega}_x\setminus \{0\}$, denote the
\emph{weight function along $\eta$} as $\psi_\eta(t):=\psi(\dot\eta(t))$.
Then, for $N \in \R \setminus \{n\}$,
we define the \emph{weighted Ricci curvature} by
\begin{equation}\label{eq:Ric_N}
\Ric_N(v) :=\Ric(v) +\psi_\eta''(0) -\frac{\psi_\eta'(0)^2}{N-n}.
\end{equation}
For the cases of $N \to +\infty$ and $N=n$, we also define
\begin{align*}
\Ric_\infty(v) &:=\Ric(v)+  \psi_\eta''(0) ,\\
\Ric_n(v) &:=\begin{cases}
\Ric(v)+    \psi_\eta''(0)  & \textrm{ if }\psi_\eta'(0)=0,\\
-\infty & \textrm{ if }\psi_\eta'(0) \neq 0. 
\end{cases}
\end{align*}
\end{definition}

\begin{remark}
The weighted Ricci curvature $\Ric_N$ might also be called
the \emph{Bakry--\'Emery--Ricci curvature}, due to the
pioneering work by Bakry and \'Emery \cite{BE} in the Riemannian
situation (we refer to the book \cite{BGL} for further information).
The Finsler version was introduced in \cite{Oint} as we mentioned,
and we refer to \cite{Ca} for the case of Lorentzian manifolds.
The weighted version is essential in our Lorentz--Finsler setting,
due to the possible lack of a canonical measure (like the volume
measure in the Lorentzian case), see \cite{ORand} for the
positive-definite case.
\end{remark}%

\subsection{Jacobi tensor field}
\begin{definition}[Jacobi tensor field]\label{df:jtf}
Let $(M,L)$ be a Finsler spacetime.
Let $\eta:I\longrightarrow M$ be a timelike geodesic of unit speed.
Define
\begin{align*}
N_\eta(t):=\{v\in T_{\eta(t)}M: g_{\dot\eta(t)}(v,\dot\eta(t))=0\}.
\end{align*}
An endomorphism $\sJ(t):N_\eta(t)\longrightarrow N_\eta(t)$
for each $t\in I$ is called a \emph{Jacobi tensor field}
along $\eta$ if
\begin{enumerate}
\item for any $g_{\dot\eta}$-parallel vector field $P$ along $\eta$,
$Y_P^\sJ(t):=\sJ(t)(P(t))$ is a Jacobi field; and
\item for any $t\in I$, $\ker(\sJ(t))\cap\ker(\sJ'(t))=\{0\},$ where
$\sJ'(t)$ is defined by $\sJ'(t)(P(t)):=D_{\dot\eta}^{\dot\eta}
Y_P^{\sJ}(t)$ for any $g_{\dot\eta}$-parallel
vector field $P$ along $\eta$.
\end{enumerate}
Furthermore, a Jacobi tensor field $\sJ$ is said to be
\emph{$g_{\dot\eta}$-symmetric}
if, for any $g_{\dot\eta}$-parallel vector fields $P_1$ and $P_2$,
it holds that $g_{\dot\eta}(P_1,\sJ(P_2))=g_{\dot\eta}(\sJ(P_1),P_2).$
\end{definition}

\begin{remark}
It is easy to show that $\sJ(t)(w)\in N_\eta(t)$ if $w\in N_\eta(t)$. Therefore,
a Jacobi tensor field is well defined. Besides, the derivative is also well defined
in the sense that $\sJ'(t):N_\eta(t)\lra N_\eta(t)$, since, for any
$g_{\dot\eta}$-parallel vector field $P$,
\begin{align*}
g_{\dot\eta}(\sJ'(t)(P(t)),\dot\eta)
=\frac{\td}{\td t}\lf(g_{\dot\eta}(\sJ(t)(P(t)),\dot\eta)\r)
-g_{\dot\eta}(\sJ(t)(P(t)),D_{\dot\eta}^{\dot\eta}\dot\eta(t))=0.
\end{align*}
\end{remark}
\begin{remark}
With the above definitions, we can deduce that, for any $g_{\dot\eta}$-parallel
vector field $P$ along $\eta:I\longrightarrow M$, and any $t\in I$,
\begin{align}\label{eqn:jacobi}
\sJ''(t)(P(t))+R_{\dot\eta(t)}(\sJ(t)(P(t)))
=(Y^{\sJ}_P)''(t)+R_{\dot\eta(t)}(Y^\sJ_P(t))=0,
\end{align}
which is simplified in some references as $\sJ''+\sR\sJ=0$. See \cite{BEE}.
\end{remark}

The following results are fundamental and frequently
used in our proofs. We here just give some simple explanations.
\begin{lemma}\label{prop:dexp}
Let $\sA$ be a Jacobi tensor field along $\eta:[0,T]\longrightarrow M$
with $\sA(0)=\sO_n$ and $\sA'(0)=\sI_n$.
Then we have $\sA(t)(P_i(t))=(\td\exp_x)_{tv}(te_i)$, where
$\{e_i\}_{i=1}^n\cup\{v\}$ forms an orthonormal basis of $(T_xM,g_v)$
with $\dot\eta(0)=v$, and $P_i(t)$ is obtained by extending $e_i$
parallelly along with $P_i(0)=e_i$,
for all $i=1,\ldots,n$. Furthermore,
\begin{align*}
\det\left(\td\exp_x\right)_{tv}=t^{-n}\det(\sA(t)),
\end{align*}
under the basis $\{P_i(t)\}_{i=1}^n$ of $N_{\eta}(t)$.
\end{lemma}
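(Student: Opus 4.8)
The plan is to establish the two assertions in Lemma~\ref{prop:dexp} separately, the first being essentially a reformulation of the classical identification of Jacobi fields with the differential of the exponential map, and the second being a linear-algebra bookkeeping step that converts this into a determinant identity in the appropriate parallel frame.

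\textbf{Step 1: Identifying $\sA(t)(P_i(t))$ with $(\td\exp_x)_{tv}(te_i)$.}
First I would fix the timelike geodesic $\eta(t)=\exp_x(tv)$ with $\dot\eta(0)=v$, $F(v)=1$, and recall the standard variational fact: for $w\in T_xM$, the vector field $t\mapsto (\td\exp_x)_{tv}(tw)$ is the Jacobi field $J_w$ along $\eta$ with $J_w(0)=0$ and $D^{\dot\eta}_{\dot\eta}J_w(0)=w$. This is obtained by differentiating the variation $s\mapsto \exp_x(t(v+sw))$ at $s=0$; the resulting field solves the Jacobi equation \eqref{eq:Jacobi} because each curve in the variation is a geodesic, and the initial conditions are read off directly. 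Applying this with $w=e_i$ gives a Jacobi field $J_{e_i}$ with $J_{e_i}(0)=0$, $D^{\dot\eta}_{\dot\eta}J_{e_i}(0)=e_i$. On the other hand, $Y^{\sA}_{P_i}(t):=\sA(t)(P_i(t))$ is a Jacobi field by Definition~\ref{df:jtf}(1), and from $\sA(0)=\sO_n$, $\sA'(0)=\sI_n$ together with $P_i(0)=e_i$ we get $Y^{\sA}_{P_i}(0)=0$ and $D^{\dot\eta}_{\dot\eta}Y^{\sA}_{P_i}(0)=\sA'(0)(e_i)=e_i$ (using the definition of $\sA'$ and $P_i$ being $g_{\dot\eta}$-parallel). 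Since the Jacobi equation is a linear second-order ODE, uniqueness from matching initial data at $t=0$ forces $Y^{\sA}_{P_i}=J_{e_i}=(\td\exp_x)_{t\,\cdot\,v}(t e_i)$, which is the first claim. One small point to check is that $e_i\in N_\eta(0)$ so that $P_i(t)\in N_\eta(t)$ for all $t$ and the expression $\sA(t)(P_i(t))$ makes sense; this is exactly $g_v(e_i,v)=0$, true by the orthonormality hypothesis, combined with the remark after Definition~\ref{df:jtf} that $N_\eta$ is preserved.

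\textbf{Step 2: The determinant identity.}
Next I would compute $\det(\td\exp_x)_{tv}$ as a linear map $T_{tv}(T_xM)\to T_{\eta(t)}M$. Via the canonical identification $T_{tv}(T_xM)\cong T_xM$, the vectors $\{te_i\}_{i=1}^n$ together with $tv$ (the radial direction, along which $(\td\exp_x)_{tv}$ sends $v$ to $\dot\eta(t)$ by the Gauss-lemma-type computation / homogeneity of geodesics) form a basis of $T_{tv}(T_xM)$ after rescaling. More precisely, $\{e_i\}_{i=1}^n\cup\{v\}$ is a basis of $T_xM$, so $\{te_i\}_{i=1}^n\cup\{tv\}$ differs from it by the scalar factor $t$ in each of the $n+1$ slots, contributing $t^{n+1}$; and in the target, $(\td\exp_x)_{tv}$ maps this to $\{\sA(t)(P_i(t))\}_{i=1}^n\cup\{\dot\eta(t)\}$ by Step~1 and the radial identity. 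Expressing both the source frame $\{e_i\}\cup\{v\}$ and the target frame $\{P_i(t)\}\cup\{\dot\eta(t)\}$ as the distinguished orthonormal/parallel frames, the matrix of $(\td\exp_x)_{tv}$ in these frames is block lower-triangular with an $n\times n$ block equal to the matrix of $\sA(t)$ (in the frame $\{P_i(t)\}$) and a trailing $1\times 1$ block equal to $1$ in the $\dot\eta$ direction. Hence $\det\bigl((\td\exp_x)_{tv}\bigr)\cdot t^{n+1}=\det(\sA(t))\cdot t$, i.e. $\det(\td\exp_x)_{tv}=t^{-n}\det(\sA(t))$, which is the second claim.

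\textbf{Main obstacle.}
I expect the genuinely delicate point to be the frame bookkeeping in Step~2 rather than Step~1: one must be careful about which determinant is meant (the Jacobian of $\exp_x$ as a map between the two tangent spaces, each equipped with its own basis), and in the Finsler setting one cannot use a single background metric, so the statement ``$\det(\td\exp_x)_{tv}$ under the basis $\{P_i(t)\}$'' must be interpreted as the determinant of the matrix representing $(\td\exp_x)_{tv}$ with source frame the scaled standard basis $\{te_i\}\cup\{tv\}$ of $T_{tv}(T_xM)$ and target frame $\{P_i(t)\}\cup\{\dot\eta(t)\}$. Verifying that the radial direction decouples cleanly --- that $(\td\exp_x)_{tv}(tv)=t\dot\eta(t)$ and that this direction is ``transverse'' in the right sense so the matrix is block-triangular --- is where a short but careful argument (using positive $2$-homogeneity of the geodesic spray, i.e. $\exp_x(t(v+sv))=\eta(t(1+s))$, so $\partial_s|_{s=0}=t\dot\eta(t)$) is needed. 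Everything else reduces to the uniqueness theorem for linear ODEs and elementary multilinear algebra.
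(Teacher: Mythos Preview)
Your proposal is correct and follows essentially the same approach as the paper: Step~1 is identical (both arguments identify $\sA(t)(P_i(t))$ with $(\td\exp_x)_{tv}(te_i)$ via uniqueness of Jacobi fields with the same initial data), and Step~2 is the same linear-algebra reduction. The only difference is that the paper's Step~2 is more direct: since the lemma explicitly computes $\det(\td\exp_x)_{tv}$ ``under the basis $\{P_i(t)\}_{i=1}^n$ of $N_\eta(t)$'', the paper stays on the $n$-dimensional normal space and reads off from $\sA(t)(P_i(t))=t(\td\exp_x)_{tv}(e_i)$ that the matrix of $\sA(t)$ equals $t$ times that of $(\td\exp_x)_{tv}$, hence $\det\sA(t)=t^n\det(\td\exp_x)_{tv}$, with no need to track the radial direction or a block decomposition on the full $(n{+}1)$-dimensional tangent space. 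Your block-triangular route reaches the same conclusion but is longer than necessary, and there is a small slip in your narrative (with source frame $\{e_i\}\cup\{v\}$ the normal block is $t^{-1}\sA(t)$, not $\sA(t)$; equivalently, with source frame $\{te_i\}\cup\{tv\}$ the radial block is $t$, not $1$), though your final displayed equation is correct.
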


\begin{proof}
Let $J_i(t):=(\td\exp_x)_{tv}(te_i)$ and $Y_i(t):=\sA(t)(P_i(t))$,
for any $t\in[0,T]$ and $i=1,\ldots,n$. Since $Y_i(0)=J_i(0)=0$ and
$Y_i'(0)=J_i'(0)=e_i$, we know that $Y_i=J_i$.
Then, for any $i=1,\ldots,n$,
\begin{align*}
\sA(t)\left(P_i(t)\right)=(\td\exp_x)_{tv}(te_i)
=t\left(\td\exp_x\right)_{tv}(e_i),
\end{align*}
which shows that the representation matrix of $\sA(t)$ is the same as
$t(\td\exp_x)_{tv}$. Therefore,
\begin{align*}
\det \sA(t)=t^n\det\left(\td\exp_x\right)_{tv},
\end{align*}
which completes the proof.
$\qedd$
\end{proof}

%

\section{Bishop--Gromov's theorem}\label{sc:BG}
In this section, for $x\in M$ we denote by $\Fut_1(x)$ the \emph{set of all unit,
i.e. $F(v):=\sqrt{-g_v(v,v)}=\sqrt{-L(v)}=1$, future-directed
timelike vectors $v\in T_xM$ such that $\exp_x(v)$ is defined} and by $\Gamma_1(x)\supset\Fut_1(x)$
the \emph{set of all unit timelike geodesics starting
from $x\in M$.}
The \emph{cut function of $U_x$} is defined by
$\bt_{U_x}(v):=\sup\{t\in(0,\infty):tv\in
\wz U_x\}$, for all $v\in\Fut_1(x).$ If $tv\notin \wz U_x$ for
any $t\in(0,\infty)$, then let $\bt_{U_x}(v)=\infty$.
Define $\wz U_1:=\{v\in\Fut_1(x):\bt_{U_x}(v)<\infty\}$
and $\bt_x:=\inf_{v\in\Fut_1(x)}\bt_{U_x}(v)$ if there
occurs no confusion. For all $r\in(0,1]$,
we also define $\wz U_x(r):=\{rv:v\in \wz U_x\}$,
$U_x(r):=\exp_x(\wz U_x(r))$.

The following results are useful in our proof.
\begin{lemma}\label{lm:gromov}
Suppose $f$ and $g$ are positive integrable functions, of a real variable $r$, for which $f/g$ is non-increasing with respect to $r$. Then the function 
\begin{align*}
\frac{\int_0^rf(t)\,\td t}{\int_0^rg(t)\,\td t}
\end{align*}
is also non-increasing with respect to $r$.
\end{lemma}
\begin{proof}
The proof is the same as that in \cite[Lemma III.4.1]{Ch}).
$\qedd$
\end{proof}

The following proposition is standard and useful in
the proof of Theorem \ref{thm:vc}. Recall the definition
of a $g_{\dot\eta}$-symmetric Jacobi tensor in
Definition \ref{df:jtf}.
`

\begin{proposition}\label{prop:hric}
Let $\sA$ be a symmetric Jacobi tensor field along
a geodesic $\eta$ with $\dot\eta(0)\in\Gamma_1(x)$ and
satisfy that $\det(\sA(t))>0$ for $t\in(0,T)$, where $T>0$. Let $N\in
(-\infty,0)\cup(n,\infty)$ and
\begin{align*}
h(t):=e^{-\psi_\eta(t)/N}\left(\det(\sA(t))\right)^{1/N}.
\end{align*}
If, for some $t\in(0,T)$, $\Ric_N(\dot{\eta}(t))\ge c$, where
$c\in\R$, then
\begin{align*}
Nh''(t)+ch(t)\le0.
\end{align*}
\end{proposition}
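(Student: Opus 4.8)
The plan is to reduce the statement to a Riccati-type inequality for the logarithmic derivative of $h$, following the standard route in weighted comparison geometry. First I would set $\theta(t):=(\log\det\sA(t))' =\tr\!\big(\sA'(t)\sA(t)^{-1}\big)$, the expansion scalar of the Jacobi tensor. Differentiating the Jacobi equation $\sA''+\sR\sA=0$ (see \eqref{eqn:jacobi}) and using that $\sA$ is $g_{\dot\eta}$-symmetric with $\det\sA(t)>0$ on $(0,T)$, one obtains the Riccati equation $\theta'+\tr\!\big((\sA'\sA^{-1})^2\big)+\Ric(\dot\eta)=0$, where $\Ric(\dot\eta)=\tr\sR_{\dot\eta}$ restricted to $N_\eta(t)$. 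By the Cauchy--Schwarz inequality applied to the symmetric endomorphism $\sA'\sA^{-1}$ on the $n$-dimensional space $N_\eta(t)$, we have $\tr\!\big((\sA'\sA^{-1})^2\big)\ge \theta^2/n$, hence the differential inequality $\theta'+\theta^2/n+\Ric(\dot\eta(t))\le 0$.

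Next I would incorporate the weight. Writing $\phi(t):=\log h(t)=-\psi_\eta(t)/N+\tfrac1N\log\det\sA(t)$, so that $\phi'=-\psi_\eta'/N+\theta/N$, I want a bound on $N\phi'' + c$, or equivalently on $h''/h=\phi''+(\phi')^2$. From $\theta=N\phi'+\psi_\eta'$ and the Riccati inequality,
\begin{align*}
N\phi''+\psi_\eta''+\frac{(N\phi'+\psi_\eta')^2}{n}+\Ric(\dot\eta(t))\le 0.
\end{align*}
The definition \eqref{eq:Ric_N} of $\Ric_N$ gives $\Ric(\dot\eta)=\Ric_N(\dot\eta)-\psi_\eta''+\dfrac{(\psi_\eta')^2}{N-n}$ at the point $t$ (applied along the reparametrized geodesic so that the derivatives at $t$ play the role of those at $0$), so the $\psi_\eta''$ terms cancel and, using $\Ric_N(\dot\eta(t))\ge c$, it remains to show
\begin{align*}
N\phi''+c+\frac{(N\phi'+\psi_\eta')^2}{n}+\frac{(\psi_\eta')^2}{N-n}\le 0.
\end{align*}

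The key elementary step — and the place where the sign condition $N\in(-\infty,0)\cup(n,\infty)$ enters — is to estimate the two quadratic terms from below by $N(\phi')^2$. Expanding, $\dfrac{(N\phi'+\psi_\eta')^2}{n}+\dfrac{(\psi_\eta')^2}{N-n}-N(\phi')^2$ is a quadratic form in $(\phi',\psi_\eta')$; a direct computation shows it equals $\dfrac{N}{n}\cdot\dfrac{N-n}{N}\cdot\dots$, i.e. it is a perfect square times a factor whose sign is controlled precisely by $N(N-n)>0$, which holds exactly in the stated range of $N$. (Concretely, completing the square in $\psi_\eta'$ one finds the remainder is $\dfrac{N}{n(N-n)}\big(\tfrac{n-N}{N}\psi_\eta' + \cdots\big)^2\ge 0$ when $N(N-n)>0$.) Granting this, we get $N\phi'' + N(\phi')^2 + c\le 0$, that is $N\big(\phi''+(\phi')^2\big)+c\le 0$, and since $h''/h=\phi''+(\phi')^2$ and $h>0$, multiplying through by $h(t)>0$ yields $Nh''(t)+ch(t)\le 0$. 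I expect the main obstacle to be bookkeeping rather than conceptual: verifying that the Riccati equation for $\theta$ holds on the quotient space $N_\eta(t)$ with the Chern connection (so that $\sR_{\dot\eta}$ is $g_{\dot\eta}$-symmetric there and the trace identities go through), and carefully tracking that $\Ric_N$ evaluated via derivatives ``at $0$'' in \eqref{eq:Ric_N} matches the derivatives of $\psi_\eta$ at the interior point $t$ after the obvious time-translation of the geodesic.
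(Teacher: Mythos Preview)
Your proposal is correct and rests on the same two ingredients as the paper: the Riccati inequality $\theta'+\theta^2/n+\Ric(\dot\eta)\le 0$ for $\theta=\tr(\sA'\sA^{-1})$, and a quadratic estimate whose sign is governed by $N(N-n)>0$. Your explicit completion of the square indeed yields
\[
\frac{(N\phi'+\psi_\eta')^2}{n}+\frac{(\psi_\eta')^2}{N-n}-N(\phi')^2
=\frac{N}{n(N-n)}\big[(N-n)\phi'+\psi_\eta'\big]^2\ge 0,
\]
which is exactly what is needed.

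The paper organizes the same computation differently: it factors $h=h_1^{(N-n)/N}h_2^{n/N}$ with $h_1=e^{-\psi_\eta/(N-n)}$ and $h_2=(\det\sA)^{1/n}$, and then invokes the general identity
\[
Nh''h^{-1}=(N-n)\,h_1''/h_1+n\,h_2''/h_2-\frac{(N-n)n}{N}\big(h_1'/h_1-h_2'/h_2\big)^2.
\]
Here the cross term is already a perfect square, so the sign condition $N\in(-\infty,0)\cup(n,\infty)$ enters transparently through the coefficient $(N-n)n/N$, and the Riccati inequality is applied directly in the form $n\,h_2''/h_2\le -\Ric(\dot\eta)$. Your route via $\phi=\log h$ is perhaps more pedestrian but has the advantage of making the algebra entirely explicit; the paper's factorization is cleaner once one knows the right split of exponents. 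Either way the substance is identical.
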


\begin{proof}
Let
\begin{align*}
h_1(t):=e^{-\frac{\psi_\eta(t)}{N-n}},\qquad h_2(t):=\left(\det\sA(t)\right)^{1/n}.
\end{align*}
Then,
\begin{align*}
h=h_1^{\frac{N-n}N}h_2^{\frac nN},
\end{align*}
which implies that
\begin{align}\label{eq:ddh1}
Nh''h^{-1}=(N-n)h_1^{-1}h_1''+nh_2^{-1}h_2''-\frac{(N-n)n}N
\left(h_1^{-1}h_1'-h_2^{-1}h_2'\right)^2.
\end{align}
It can be easily calculated that, for any $t\in[0,T],$
\begin{align}\label{eq:ddh2}
\frac{h_1''(t)}{h_1(t)}=\left(\frac{\psi'_\eta(t)}{N-n}\right)^2
-\frac{\psi_\eta''(t)}{N-n}.
\end{align}
Combining \eqref{eq:ddh1}, \eqref{eq:ddh2} with
$N\in(-\infty,0)\cup(n,\infty)$, we have
\begin{align*}
Nh''(t)h^{-1}(t)\le\frac{\left(\psi'_\eta(t)\right)^2}{N-n}-\psi_\eta''(t)-\Ric(\dot\eta(t))
=-\Ric_N(\dot\eta(t))\le-c,
\end{align*}
where we used the curvature bound in the last inequality
and the unweighted Bishop inequality, i.e.
\begin{align*}
(\tr(A'A^{-1}))'\le-\Ric(\dot \eta)-\frac{(\tr(A'A^{-1}))^2}n,
\end{align*}
the proof of which is the same as that of \eqref{eq:Riccati_trC}.
$\qedd$
\end{proof}

\emph{Proof of Theorem \ref{thm:vc}. }
Fix $v\in\wz U_1$, then $\bt_{U_x}(v)=\bt_x$ by our assumption.
Put $\eta(t):=\exp_x(tv)$ for all $t\in[0,\bt_x)$.
Choose a $g_v$-orthonormal basis $\{e_i\}_{i=1}^n\cup\{v\}$ of $T_xM$.
Let $\{E_i(t)\}_{i=0}^n$ be a frame
along $\eta$ such that $E_i$ is $g_{\dot\eta}$-parallel with $E_0(0)=v$ and $E_i(0)=e_i$ for $i=1,\ldots,n$.
Then, $\{E_i(t)\}_{i=0}^n$ forms an orthonormal basis of $T_{\eta(t)}M$, and
hence, for all $t\in[0,\bt_x)$,
\begin{align}\label{eq:detg}
\det[g_{\dot\eta(t)}(E_i(t),E_j(t))]=-1.
\end{align}
Moreover, $\{E_i(t)\}_{i=1}^n$ is a basis of $N_\eta(t)$.
For any $w\in N_\eta(t)$,
we extend it to the $g_{\dot\eta}$-parallel vector field $P$ along
$\eta$ such that $P(t)=w$ and define
\begin{align*}
\sA(t)(w):=\left(\td(\exp_x)\right)_{tv}\left(tP(0)\right).
\end{align*}
Then, for any $g_{\dot\eta}$-parallel vector field $Q$ along $\eta$, we have
\begin{align*}
Y_Q^\sA(t):=\sA(t)(Q(t))
\end{align*}
is a Jacobi field and if there is some $t_0\in [0,\bt_x)$ such that
$Y^\sA_Q(t_0)=0$ and $(Y^\sA_Q)'(t_0)=0$,
then $Q(t)\equiv0$ for all $t\in [0,\bt_x)$. This
shows that $\sA$ is a Jacobi tensor field.

On the other hand, it is easy to check that $\sA(0)=\sO_n$, $\sA'(0)=\sI_n$,
and under the basis $\{E_i(t)\}_{i=1}^n$ of $N_\eta(t)$, $\det(\sA(t))>0$
for $t\in (0,\bt_x)$. Let $\psi_\eta$ be the weight function of
$\rho$ along $\eta$ and $h_v(t):=e^{-\psi_{\eta}(t)/N}(\det(\sA(t)))^{1/N}$.
We use Proposition \ref{prop:hric} for $\Ric_N\ge c$, to obtain that
\begin{align*}
h''_v(t)+\frac cNh_v(t)\le0,
\end{align*}
for all $t\in(0,\bt_x)$,
which together with $\bs_{c/N}''(t)+(c/N)\bs_{c/N}(t)=0$,
and $\bs_{c/N}(t)\ge0$ for $t\in[0,T_x]$,
we can infer that
\begin{align*}
\frac {\td}{\td t}\left[h'_v(t)\bs_{c/N}(t)-h_v(t)\bs_{c/N}'(t)\right]\le0.
\end{align*}
Since $h_v(0)=0$ and $N\in(n,\infty)$, we know that
\begin{align*}
\lim_{t\to0^+}h'_v(t)\bs_{c/N}(t)
&=\lim_{t\to0^+}\frac{h_v(t)\bs_{c/N}(t)}t
=\lim_{t\to0^+}e^{-\psi_\eta(t)/N}
\lim_{t\to0^+}\left(\frac{\det(\sA(t))}{t^N}\right)^{1/N}
\frac{\bs_{c/N}(t)}{t}t\\
&=\lim_{t\to0^+}e^{-\psi_\eta(t)/N}
\lim_{t\to0^+}\left(\frac{\det(\sA(t)/t)}{t^{N-n}}\right)^{1/N}
\frac{\bs_{c/N}(t)}{t}t\\
&=\lim_{t\to0^+}e^{-\psi_\eta(t)/N}
\lim_{t\to0^+}\left(\det(\sA'(0))\right)^{1/N}t^{n/N}=0,
\end{align*}
which implies that the function $h_v/\bs_{c/N}$ is non-increasing in $(0,\bt_x)$.
By Lemma \ref{lm:gromov}, we have
\begin{align*}
\frac{\rho\left(U_x(r)\right)}{\rho\left(U_x(R)\right)}
=\frac{\int_{\wz U_1}\int_0^{r\bt_x}h_v^N(t)\,\td t\td\sigma(v)}{\int_{\wz
U_1}\int_0^{R\bt_x}h_v^N(t)\,\td t\td\sigma(v)}
\ge\frac{\int_0^{r\bt_x}\bs_{c/N}^N(t)\,\td t}{\int_0^{R\bt_x}\bs_{c/N}^N(t)\,\td t},
\end{align*}
where we employ Lemma \ref{prop:dexp} and \eqref{eq:detg},
together with that $\bt_{U_x}$ is constant on $U_1$, to
deduce that
\begin{align*}
\rho(U_x(r))&=\int_{(\exp_x)^{-1}(U_x(r))}e^{-\psi(u)}
\det\left(\td(\exp_x)\right)_u\,\td u\\
&=\int_{\wz{U}_1}\int_0^{r\bt_x}e^{-\psi_{\eta_v}(t)}\det(\sA(t))\,\td t\td\sigma(v)
=\int_{\wz{U}_1}\int_0^{r\bt_x}\left(h_v(t)\right)^N\,\td t\td\sigma(v),
\end{align*}
which completes the proof.
$\qedd$

\section{G\"unther's Theorem}\label{sc:Gn}
\emph{Proof of Theorem \ref{thm:vc-flag}.}
For any future-directed timelike unit vector $v\in\wz U_1$,
and a geodesic $\eta_v$ starting at $x$ with
$\dot\eta_v(0)=v$, note that $\eta_v$ has no conjugate
points in $[0,\bt_{U_x}(v))$, we can define, for all $t\in[0,\bt_{U_x}(v)),$
\begin{align*}
\sA_v(t):=N_{\eta_v}(t)&\longrightarrow N_{\eta_v}(t)\\
w&\mapsto \left(\td\exp_x\right)_{tv}\left(tP_w(0)\right),
\end{align*}
where $P_w$ is the $g_{\dot\eta_v}$-parallel vector field along
$\eta_v$ with $P_w(t)=w$. Then $\sA_v$ is a Jacobi tensor field
for the same reason as in the proof of Theorem \ref{thm:vc}. Now let
$\{e_i\}_{i=1}^n\cup\{v\}$ be orthonormal and
\begin{align*}
f(t):=\frac{\det\sA_v(t)}{\bs_{-c}^n(t)},
\end{align*}
for all $t\in(0,\bt_{U_x}(v))$. We claim that $f\ge1$ in $(0,\bt_{U_x}(v))$.
Indeed, by noting that
\begin{align*}
\lim_{t\to0^+}f(t)=\lim_{t\to0^+}\frac{\det\sA_v(t)}{t^n}\frac{t^n}{\bs_{-c}^n(t)}
=\det\sA'_v(0)=1,
\end{align*}
we need only to prove $f'\ge0$, which is equivalent to
\begin{align*}
\frac{\left(\det\sA_v(t)\right)'}{\det\sA_v(t)}\ge
n\frac{\bs_{-c}'(t)}{\bs_{-c}(t)}.
\end{align*}
Let $\sB_v:=\sA_v^T\sA_v.$ Then
\begin{align*}
\tr(\sB_v'\sB_v^{-1})=\frac{\left(\det\sB_v(t)\right)'}{\det\sB_v(t)}
=2\frac{\left(\det\sA_v(t)\right)'}{\det\sA_v(t)}.
\end{align*}
Since $\sB_v$ is symmetric, by Gram--Schmidt process,
we can find another orthonormal basis, without loss of
generality, still denoted by $\{e_i(t)\}_{i=1}^n$ of $N_{\eta_v}(t)$ for any
$t\in(0,\bt_{U_x}(v))$ such that $\sB_v(t)e_i(t)=\lambda_i(t)e_i(t)$.
Observe that
\begin{align*}
\lambda_i=g_{\dot\eta_v}\left(\sB_v e_i,e_i\right)
=g_{\dot\eta_v}\left(\sA_v e_i,\sA_v e_i\right)
=g_{\dot\eta_v}\left(Y_{e_i}^{\sA_v},Y_{e_i}^{\sA_v}\right),
\end{align*}
with the notations in Definition \ref{df:jtf}, we can obtain that
\begin{align*}
\tr\left(\sB_v'\sB_v^{-1}\right)
=\sum_{i=1}^n\lambda_i'\lambda_i^{-1}
=\sum_{i=1}^n
\frac{\left(g_{\dot\eta_v}\left(Y_{e_i}^{\sA_v},Y_{e_i}^{\sA_v}\right)
\right)'}{g_{\dot\eta_v}\left(Y_{e_i}^{\sA_v},Y_{e_i}^{\sA_v}\right)}.
\end{align*}
To simplify our notations, we denote that $Y_i:=Y_{e_i}^{\sA_v}$
and $\eta:=\eta_v$.
Since $Y_i(t)\neq0$ for all $t\in(0,\bt_{U_x}(v))$, we have
\begin{align*}
\frac{\td^2}{\td t^2}\left[\sqrt{g_{\dot\eta}(Y_i,Y_i)}\right]
&=\frac{\td}{\td t}\left[\frac{g_{\dot\eta}(Y_i,Y_i')}
{\sqrt{g_{\dot\eta}(Y_i,Y_i)}}\right]
=\frac{g_{\dot\eta}(Y_i'',Y_i)+g_{\dot\eta}(Y_i',Y_i')}
{\sqrt{g_{\dot\eta}(Y_i,Y_i)}}-\frac{\left[g_{\dot\eta}(Y_i,Y_i')\right]^2}
{\left[g_{\dot\eta}(Y_i,Y_i)\right]^{3/2}}\\
&=-\frac{g_{\dot\eta}(\sR_{\dot\eta}(Y_i),Y_i)}{\sqrt{g_{\dot\eta}(Y_i,Y_i)}}
+\frac{g_{\dot\eta}(Y_i',Y_i')g_{\dot\eta}(Y_i,Y_i)
-\left[g_{\dot\eta}(Y_i,Y_i')\right]^2}{\left[g_{\dot\eta}(Y_i,Y_i)\right]^{3/2}}.
\end{align*}
By the Cauchy--Schwarz inequality, the second term is nonnegative, thus we have the following inequalities by using the curvature bound condition $\bK(\dot\eta,Y_i)\le-c$ for $c\ge0$:
\begin{align*}
\frac{\td^2}{\td t^2}\left[\sqrt{g_{\dot\eta}(Y_i,Y_i)}\right]
&\ge-\frac{g_{\dot\eta}(\sR_{\dot\eta}(Y_i),Y_i)}
{\sqrt{g_{\dot\eta}(Y_i,Y_i)}}
=\frac{\bK(\dot\eta,Y_i)\left[g_{\dot\eta}(\dot\eta,\dot\eta)
g_{\dot\eta}(Y_i,Y_i)-g_{\dot\eta}(\dot\eta,Y_i)^2\right]}
{\sqrt{g_{\dot\eta}(Y_i,Y_i)}}\\
&\ge \frac{c\left[g_{\dot\eta}(Y_i,Y_i)+g_{\dot\eta}
(\dot\eta,Y_i)^2\right]}
{\sqrt{g_{\dot\eta}(Y_i,Y_i)}}
\ge c\sqrt{g_{\dot\eta}(Y_i,Y_i)},
\end{align*}
which, together with the definition of $\bs_{-c}$, implies that
\begin{align*}
&\frac{\td}{\td t}\left[\frac{\td}
{\td t}\left(\sqrt{g_{\dot\eta}(Y_i(t),Y_i(t))}\right)
\bs_{-c}(t)-\sqrt{g_{\dot\eta}(Y_i(t),Y_i(t))}\bs_{-c}'(t)\right]\\
&\hs\hs=\frac{\td^2}{\td t^2}\left(\sqrt{g_{\dot\eta}(Y_i(t),Y_i(t))}\right)
\bs_{-c}(t)-c\sqrt{g_{\dot\eta}(Y_i(t),Y_i(t))}\bs_{-c}(t)\ge0,
\end{align*}
for all $t\in(0,\bt_{U_x}(v))$.
Combining this with $Y_i(0)=0$, we obtain that
\begin{align*}
\frac{\left(\det\sA_v(t)\right)'}{\det\sA_v(t)}
=\frac12\tr\left(\sB_v'(t)\sB^{-1}(t)\right)
=\frac12\sum_{i=1}^n\frac{\left[g_{\dot\eta}(Y_i,Y_i)\right]'(t)}
{g_{\dot\eta}(Y_i(t),Y_i(t))}
\ge\sum_{i=1}^n\frac{\bs_{-c}'(t)}{\bs_{-c}(t)}=\frac{n\bs_{-c}'(t)}{\bs_{-c}(t)},
\end{align*}
which completes the proof of the claim. Finally,
\begin{align*}
\rho(U_x)
&=\int_{\wz U_1}\int_0^{\bt_{U_x}(v)}
e^{-\psi_{\eta}(t)}\det\left(\sA_v(t)\right)\td t\td \sigma(v)\\
&\ge e^{-k}\int_{\wz U_1}\int_0^{\bt_x}\bs_{-c}^n(t)\td t\td \sigma(v)
=e^{-k}\sigma(\wz U_1)\int_0^{\bt_x}\bs_{-c}^n(t)\td t,
\end{align*}
where we used $\psi_\eta(t)\le k$ to complete the proof.
$\qedd$

\section{The case $N=\infty$}\label{sc:Ninf}
For $N=\infty$, following the lines in \cite{Sh,WW}, we
obtain the following volume comparison theorem.
\begin{theorem}\label{thm:vc-inf}
Let $(M,L,\rho)$ be a Finsler spacetime satisfying
$\Ric_\infty(v)\ge nc$ for some $c\in\R$ and for all unit timelike
radial vectors $v\in T_xM$ tangent to some SCLV subset $U_x\subset M$
at $x\in U$. Suppose that, for any causal geodesic
$\eta_v(t):=\exp_x(tv)$ with $v\in\wz U_1$,
it holds that $\psi_{\eta_v}'\ge-a$ along
$\eta_v$, for some $a\in\R$ and for all $t\in(0,\bt_{U_x}(v))$.
If $\bt_{U_x}$ is constant on $U_1$, then
\begin{align*}
\frac{\rho\left(U_x(r)\right)}{\rho\left(U_x(R)\right)}
\ge \frac{\int_0^{rT_x}e^{at}\bs_{c}^n(t)\,\td t}
{\int_0^{RT_x}e^{at}\bs_{c}^n(t)\,\td t},
\end{align*}
for all $0<r\le R\le 1$, where $T_x:=\bt_x$ if $c\le0$ and
$T_x:=\min\{\bt_x,\pi/(2\sqrt{c})\}$ if $c>0$.
\end{theorem}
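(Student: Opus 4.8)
The plan is to follow the same architecture as the proof of Theorem~\ref{thm:vc}, replacing Proposition~\ref{prop:hric} by a differential inequality adapted to $N=\infty$ and an auxiliary lower bound $\psi_{\eta_v}'\ge-a$. For a fixed $v\in\wz U_1$ (so $\bt_{U_x}(v)=\bt_x$), set $\eta=\eta_v$, build the Jacobi tensor field $\sA$ along $\eta$ exactly as before (with $\sA(0)=\sO_n$, $\sA'(0)=\sI_n$, $\det(\sA(t))>0$ on $(0,\bt_x)$ in a parallel orthonormal frame), and this time consider the radial density $h_v(t):=e^{-\psi_\eta(t)/n}(\det\sA(t))^{1/n}$, i.e. the $N=n$ normalization. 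Writing $h_2(t):=(\det\sA(t))^{1/n}$ and using $\tr(\sA'\sA^{-1})=(\det\sA)'/\det\sA$ together with the unweighted Bishop inequality $(\tr(\sA'\sA^{-1}))'\le-\Ric(\dot\eta)-\frac1n(\tr(\sA'\sA^{-1}))^2$ (the same Riccati estimate referenced after Proposition~\ref{prop:hric}), one obtains $n\,h_2''/h_2\le-\Ric(\dot\eta)$. Adding the contribution of the weight, $\big(e^{-\psi_\eta/n}\big)''/e^{-\psi_\eta/n}=\frac1{n^2}(\psi_\eta')^2-\frac1n\psi_\eta''$, and expanding $h_v=e^{-\psi_\eta/n}h_2$ gives a cross term which, after discarding the nonpositive square $-\frac1n(\tfrac1n\psi_\eta'+h_2'/h_2-\dots)^2$-type remainder, yields
\begin{align*}
n\,\frac{h_v''(t)}{h_v(t)}\le-\Ric(\dot\eta(t))-\psi_\eta''(t)
+\frac{2}{n}\,\psi_\eta'(t)\,\frac{h_2'(t)}{h_2(t)}+\text{(nonpositive)}.
\end{align*}
The extra first-order term $\psi_\eta' h_2'/h_2$ is exactly what the hypothesis $\psi_{\eta_v}'\ge-a$ is meant to control, and combined with $\Ric_\infty(\dot\eta)=\Ric(\dot\eta)+\psi_\eta''\ge nc$ it should produce, after the standard manipulation, the inequality $h_v''+\big(c-\tfrac{a}{n}\cdot(\text{log-derivative})\big)h_v\le0$; more cleanly, following \cite{Sh,WW}, one shows directly that $g(t):=e^{at/n}h_v(t)$ satisfies $g''/g\le -c$, hence $g(t)\le e^{at/n}\,\bs_c(t)\cdot(\text{const})$ and the ratio $h_v/\big(e^{at/n}\bs_c\big)^{?}$ is monotone.

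More precisely, the core reduction I would carry out is: prove that $h_v(t)\le e^{-at/n}\bs_c(t)$-type comparison is \emph{not} what is wanted (that would be Günther-type); instead, for Bishop--Gromov one wants the \emph{ratio monotonicity} $t\mapsto h_v(t)^n\big/\big(e^{at}\bs_c^n(t)\big)$ to be non-increasing on $(0,\bt_x)$. To get this I would show $\frac{d}{dt}\!\big[\tfrac{d}{dt}(e^{at/n}h_v)\cdot\bs_c - (e^{at/n}h_v)\cdot\bs_c'\big]\le0$, which reduces to checking $(e^{at/n}h_v)''+c\,(e^{at/n}h_v)\le0$; expanding, this is $h_v''+\tfrac{2a}{n}h_v'+\big(\tfrac{a^2}{n^2}+c\big)h_v\le0$, and the differential inequality for $h_v$ coming from $\Ric_\infty\ge nc$ and $\psi_\eta'\ge-a$ is arranged to give precisely this. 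Then, as in Theorem~\ref{thm:vc}, the boundary behavior $\lim_{t\to0^+}(e^{at/n}h_v)'\bs_c=0$ (using $h_v(0)=0$, $\det\sA'(0)=1$, $\psi_\eta$ bounded near $0$) upgrades the differential inequality to monotonicity of $e^{at/n}h_v\big/\bs_c$, hence of its $n$-th power divided by $e^{at}\bs_c^n$.

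The final assembly is the same disintegration as before: using Lemma~\ref{prop:dexp}, \eqref{eq:detg} and that $\bt_{U_x}$ is constant on $U_1$,
\begin{align*}
\rho(U_x(r))=\int_{\wz U_1}\int_0^{rT_x}\big(h_v(t)\big)^n\,\td t\,\td\sigma(v),
\end{align*}
and then Lemma~\ref{lm:gromov} applied with $f=h_v^n$ and $g=e^{at}\bs_c^n$ (both positive and integrable on $(0,T_x)$, where the cutoff $T_x=\min\{\bt_x,\pi/(2\sqrt c)\}$ for $c>0$ guarantees $\bs_c>0$ and integrability, and also keeps $e^{at/n}h_v/\bs_c$ well behaved) gives the pointwise-in-$v$ inequality $\int_0^{rT_x}h_v^n\big/\int_0^{RT_x}h_v^n\ge\int_0^{rT_x}e^{at}\bs_c^n\big/\int_0^{RT_x}e^{at}\bs_c^n$; integrating numerator and denominator over $\wz U_1$ against $\sigma$ and using monotonicity of the single-variable quotient (a further application of Lemma~\ref{lm:gromov}-type bookkeeping, or directly Chebyshev's sum/integral inequality) yields the claimed bound on $\rho(U_x(r))/\rho(U_x(R))$.

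\textbf{Main obstacle.} The delicate point is handling the cross term $\psi_\eta' \cdot h_2'/h_2$: unlike the $N\in(n,\infty)$ case in Proposition~\ref{prop:hric}, it cannot be absorbed into a perfect square, and one must genuinely use the one-sided bound $\psi_{\eta_v}'\ge -a$ \emph{together with} the sign of $h_2'/h_2$ (which is controlled via the Riccati comparison only on a subinterval, necessitating the cutoff $T_x$ when $c>0$). Getting the constants to line up so that the resulting inequality is exactly $(e^{at/n}h_v)''+c(e^{at/n}h_v)\le0$ — rather than something with an $a$-dependent curvature shift — is the step where the argument of \cite{Sh,WW} has to be transcribed with care to the Lorentz--Finsler Jacobi-tensor formalism; I expect this bookkeeping, plus verifying the $t\to0^+$ boundary limit with the extra exponential factor, to be where most of the real work lies.
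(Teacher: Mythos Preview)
Your target is right: you want $t\mapsto h_v(t)^n/(e^{at}\bs_c^n(t))$ non-increasing, equivalently $\lambda_\psi:=[\log(e^{-\psi_\eta}\det\sA)]'\le \lambda_c+a$ with $\lambda_c:=n\bs_c'/\bs_c$. But the route you outline does not reach it.

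First a sign slip: the $n$-th power of $e^{at/n}h_v/\bs_c$ is $e^{at}h_v^n/\bs_c^n$, not $h_v^n/(e^{at}\bs_c^n)$; to get the latter you would need $e^{-at/n}h_v/\bs_c$ non-increasing. More importantly, the second-order inequality you aim for, $(e^{\pm at/n}h_v)''+c\,(e^{\pm at/n}h_v)\le0$, is not obtainable from the hypotheses. With $h_v=e^{-\psi_\eta/n}h_2$ and $h_2=(\det\sA)^{1/n}$ one has exactly
\[
n\,\frac{h_v''}{h_v}=\frac{(\psi_\eta')^2}{n}-\psi_\eta''-2\psi_\eta'\,\frac{h_2'}{h_2}+n\,\frac{h_2''}{h_2}
\le \frac{(\psi_\eta')^2}{n}-\Ric_\infty(\dot\eta)-2\psi_\eta'\,\frac{h_2'}{h_2},
\]
and the quadratic term $(\psi_\eta')^2/n$ cannot be absorbed: the only control you have is the \emph{one-sided} bound $\psi_\eta'\ge-a$, so $\psi_\eta'$ may be arbitrarily large and positive. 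There is no ``nonpositive square'' that swallows this term (your displayed inequality with coefficient $2/n$ and a nonpositive remainder is simply false). Equivalently, the mechanism of Proposition~\ref{prop:hric} relies on $-\frac{(N-n)n}{N}(\cdot)^2\le0$, which degenerates as $N\to\infty$; it is precisely this degeneration that forces a different argument for $N=\infty$.

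The paper avoids the square entirely by staying at the first-order (Riccati) level for the \emph{unweighted} log-derivative $\lambda:=[\log\det\sA]'=\tr(\sA'\sA^{-1})$. From $\lambda'+\lambda^2/n+\Ric(\dot\eta)\le0$ and $\lambda_c'+\lambda_c^2/n+nc=0$ one gets, using $\Ric_\infty\ge nc$,
\[
\big[\bs_c^2(\lambda-\lambda_c)\big]'\le \bs_c^2\,\psi_\eta''.
\]
Integrating from $0$ and then integrating $\int_0^t\bs_c^2\psi_\eta''$ by parts produces $\bs_c^2(t)\psi_\eta'(t)-\int_0^t(\bs_c^2)'\psi_\eta'$; since $(\bs_c^2)'>0$ on $(0,T_x)$ (this is exactly where the cutoff $T_x=\min\{\bt_x,\pi/(2\sqrt c)\}$ for $c>0$ enters) and $\psi_\eta'\ge-a$, the integral is $\ge -a\,\bs_c^2(t)$, and one reads off $\lambda_\psi\le\lambda_c+a$ directly. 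The point is that the weight enters only through $\psi_\eta''$ \emph{after} the Riccati comparison, and integration by parts linearizes it to $\psi_\eta'$; no $(\psi_\eta')^2$ ever appears. Once you have $\lambda_\psi\le\lambda_c+a$, your final disintegration and the use of Lemma~\ref{lm:gromov} are correct as written.
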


\begin{proof}
Define $\{E_i(t)\}_{i=0}^n$, $N_\eta(t)$ and $\sA(t)$ as in the proof
of Theorem \ref{thm:vc}. Let $\sR(t):N_\eta(t)\lra N_\eta(t)$ be an
endomorphism such that $\sR(t)(w):=R_{\dot\eta(t)}(w)$, where
$R_{\dot\eta(t)}$ is the curvature operator as in \eqref{eq:R_v}.
Then, $\sR(t)$ is in fact a linear map, and hence a matrix under
the basis $\{E_i(t)\}_{i=1}^n$. Let $\sC(t):=\sA'(t)\sA^{-1}(t)$ for
$t\in(0,\bt_{U_x}(v))$. $\sC$ is well defined since $\sA$ is non-degenerate
for that $\eta$ has no conjugate points in $(0,\bt_{U_x}(v))$.
Since $\sA$ is a Jacobi tensor field, we have
\begin{align*}
\sA''+\sR\sA=0.
\end{align*}
Right multiply $\sA^{-1}$ to both sides in the above equation, we deduce that
$\sA''\sA^{-1}+\sR=0$, which gives
\begin{align*}
\sC'+\sC^2+\sR=\sA''\sA^{-1}+\sA'\lf(\sA^{-1}\r)'+\sA'\sA^{-1}\sA'\sA^{-1}
+\sR=\sA''\sA^{-1}+\sR=0.
\end{align*}
Taking the traces of both sides, with the help of Cauchy--Schwarz inequality,
we get a \emph{Riccati inequality} for $\tr(\sC)$ as
\begin{align}\label{eq:Riccati_trC}
\lf[\tr(\sC)\r]'+\frac{[\tr(\sC)]^2}n+\Ric(\dot\eta)\le0.
\end{align}
For $t\in(0,\bt_{U_x}(v))$, let
\begin{align*}
\lambda(t)&:=\lf[\log\lf(\det\sA(t)\r)\r]',\\
\lambda_\psi(t)&:=\lambda(t)-\psi'_\eta(t)=\lf(\log\lf[e^{-\psi_\eta(t)}
\det\sA(t)\r]\r)',\\
\lambda_c(t)&:=n\frac{\bs_{c}'(t)}{\bs_{c}(t)}.
\end{align*}
Recall that $\Ric_\infty(\dot\eta)=\Ric(\dot\eta)+\psi''_\eta\ge nc$.
By \eqref{eq:Riccati_trC}, we obtain that
\begin{align*}
\lambda'+\frac{\lambda^2}n+\Ric(\dot\eta)\le0,
\end{align*}
which, together with
\begin{align*}
\lambda_c'+\frac{\lambda_c^2}n+nc=0,
\end{align*}
implies that
\begin{align}
\lf[\bs_c^2(\lambda-\lambda_c)\r]'
&=2\bs_c\bs_c'(\lambda-\lambda_c)+\bs_c^2(\lambda'-\lambda_c')
=\bs_c^2\lf[\frac{2\bs_c'}{\bs_c}(\lambda-\lambda_c)+(\lambda'-\lambda_c')\r]\noz\\
&\le\bs_c^2\lf[\frac{2\lambda_c}{n}(\lambda-\lambda_c)
-\frac{\lambda^2-\lambda_c^2}n+\psi_\eta''\r]
=\bs_c^2\lf[-\frac{(\lambda-\lambda_c)^2}n+\psi_\eta''\r]\le\bs_c^2\psi_\eta''.
\label{eq:sc}
\end{align}
Integrating \eqref{eq:sc} from 0 to some $t\in(0,\bt_{U_x}(v))$ yields that
\begin{align*}
\bs_c^2(t)(\lambda(t)-\lambda_c(t))
\le\int_0^t\bs_c^2(\tau)\psi''_\eta(\tau)\td\tau
=\bs_c^2(t)\psi'_\eta(t)-\int_0^t(\bs_c^2)'(\tau)\psi_\eta'(\tau)\td\tau.
\end{align*}
Since $\psi_\eta'(\tau)\ge-a$ and $(\bs_c^2)'(\tau)>0$,
for $\tau\in(0,\pi/(2\sqrt{c}))$, we see that
\begin{align*}
\lambda_\psi(t)\le\lambda_c(t)+\frac{a}{\bs_c^2(t)}\int_0^t(\bs_c^2)'(\tau)\td\tau
=\lambda_c(t)+a,
\end{align*}
which is equivalent to
\begin{align*}
\lf[\log(e^{-\psi_\eta(t)}\det(\sA(t)))\r]'\le\lf[\log(e^{at}\bs_c^n(t))\r]'.
\end{align*}
By again Lemma \ref{lm:gromov}, we have
\begin{align*}
\frac{\rho(U_x(r))}{\rho(U_x(R))}
\ge \frac{\int_0^{rT_x}e^{at}\bs_c^n(t)\td t}{\int_0^{RT_x}e^{at}\bs_c^n(t)\td t},
\end{align*}
for $0<r\le R\le 1$. $\qedd$
\end{proof}

Using a similar method, we can deduce the following estimate
when $N=\infty$. Let us mention Sturm's original results
for metric measure spaces, see \cite[Theorem 4.26]{St}.
If $U$ is a SCLV at $x$, then
define $\fB_U^+(x,r):=\exp_x(\{v\in \wz U_x:F(v)<r\})$.
\begin{theorem}
Let $(M,L,\rho)$ be a Finsler spacetime and satisfy that $\Ric_\infty\ge c$.
If $U$ is a SCLV at $x\in M$, then
\begin{align*}
\rho(\fB_U^+(x,r))\le\rho(\fB_U^+(x,4\vez))+\sigma\lf(\wz U_1
\r)\int_{4\vez}^r
e^{C_0t-\frac{ct^2}2}\td t,
\end{align*}
sufficiently small $\vez>0$ and all $r>4\vez$, where
$C_0>0$ is a constant depending on $c$, $\vez$ and $x$. In particular,
when $c=0$, i.e. the timelike $\infty$-convergence condition (see
\cite[Definition 9.10]{LMO}) holds,
\begin{align*}
\rho(\fB_U^+(x,r))\le\rho(\fB_U^+(x,4\vez))
+\sigma\lf(\wz U_1\r)\frac{e^{C_0r}}{C_0}.
\end{align*}
\end{theorem}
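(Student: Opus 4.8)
The strategy is to import the polar-coordinate decomposition of $\rho$ from the proof of Theorem~\ref{thm:vc-flag}, combine it with the Riccati estimate from the proof of Theorem~\ref{thm:vc-inf}, and thereby reduce the claim to an elementary $t$-integration of a pointwise bound on the Lorentzian volume density. For $v\in\wz U_1$ put $\eta_v(t)=\exp_x(tv)$, let $\sA_v$ be the Jacobi tensor field along $\eta_v$ with $\sA_v(0)=\sO_n$, $\sA_v'(0)=\sI_n$ (so $\det\sA_v>0$ on $(0,\bt_{U_x}(v))$), and set $h_v(t):=e^{-\psi_{\eta_v}(t)}\det\sA_v(t)$. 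By Lemma~\ref{prop:dexp} (as in the proof of Theorem~\ref{thm:vc-flag}),
\begin{align*}
\rho\lf(\fB_U^+(x,r)\r)=\int_{\wz U_1}\int_0^{\min\{r,\bt_{U_x}(v)\}}h_v(t)\,\td t\,\td\sigma(v),
\end{align*}
so subtracting the same identity at $r=4\vez$ leaves only the directions with $\bt_{U_x}(v)>4\vez$:
\begin{align*}
\rho\lf(\fB_U^+(x,r)\r)-\rho\lf(\fB_U^+(x,4\vez)\r)=\int_{\{v\in\wz U_1\,:\,\bt_{U_x}(v)>4\vez\}}\int_{4\vez}^{\min\{r,\bt_{U_x}(v)\}}h_v(t)\,\td t\,\td\sigma(v).
\end{align*}
It therefore suffices to produce a constant $C_0=C_0(c,\vez,x)>0$, not depending on $v$, with $h_v(t)\le e^{C_0t-ct^2/2}$ for all $t\in[4\vez,\bt_{U_x}(v))$.

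For the density bound I would rerun the Riccati computation of Theorem~\ref{thm:vc-inf}. With $\lambda(t):=(\log\det\sA_v)'(t)=\tr(\sA_v'\sA_v^{-1})$ and $\lambda_\psi(t):=\lambda(t)-\psi_{\eta_v}'(t)=(\log h_v)'(t)$, the Riccati inequality \eqref{eq:Riccati_trC} together with $\Ric_\infty(\dot\eta_v)=\Ric(\dot\eta_v)+\psi_{\eta_v}''\ge c$ gives, for $t\in(0,\bt_{U_x}(v))$,
\begin{align*}
\lambda_\psi'(t)=\lambda'(t)-\psi_{\eta_v}''(t)\le-\frac{\lambda(t)^2}{n}-\Ric_\infty(\dot\eta_v(t))\le-c,
\end{align*}
the only input being $\lambda(t)^2/n\ge0$; here, unlike in Theorem~\ref{thm:vc-inf}, no one-sided bound on $\psi_{\eta_v}'$ is assumed, and this is exactly what prevents a pointwise control of $h_v$ and forces the weaker quadratic-exponential conclusion. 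Integrating once yields $\lambda_\psi(t)\le\lambda_\psi(s)-c(t-s)$ for $0<s<t<\bt_{U_x}(v)$, and integrating $(\log h_v)'=\lambda_\psi$ once more,
\begin{align*}
h_v(t)\le h_v(s)\exp\!\lf(\lambda_\psi(s)(t-s)-\tfrac{c}{2}(t-s)^2\r).
\end{align*}

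To extract $C_0$ I would fix $s=2\vez$. Expanding $-\tfrac c2(t-2\vez)^2=-\tfrac c2t^2+2c\vez t-2c\vez^2$ and dividing by $t\ge4\vez$,
\begin{align*}
\frac{\log h_v(t)+ct^2/2}{t}\le\frac{\log h_v(2\vez)-2\vez\lambda_\psi(2\vez)-2c\vez^2}{t}+\lambda_\psi(2\vez)+2c\vez.
\end{align*}
Hence, provided $M_1:=\sup|\log h_v(2\vez)|$ and $M_2:=\sup|\lambda_\psi(2\vez)|$, with the suprema over $\{v\in\wz U_1:\bt_{U_x}(v)>4\vez\}$, are finite, one gets $h_v(t)\le e^{C_0t-ct^2/2}$ on $[4\vez,\bt_{U_x}(v))$ with, say, $C_0=(4\vez)^{-1}(M_1+2\vez M_2+2|c|\vez^2)+M_2+2|c|\vez$ (increased to be positive if need be). Inserting this into the difference above and using $\sigma(\{v:\bt_{U_x}(v)>4\vez\})\le\sigma(\wz U_1)$ gives the stated inequality; for $c=0$ one finishes with $\int_{4\vez}^r e^{C_0t}\,\td t\le e^{C_0r}/C_0$.

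The step I expect to be the genuine obstacle is the finiteness of $M_1$ and $M_2$, and this is precisely where ``$\vez$ sufficiently small'' is needed. The set $\{v\in\wz U_1:\bt_{U_x}(v)>4\vez\}$ is relatively compact: each of its elements satisfies $\bt_{U_x}(v)\,v\in\overline{\wz U_x}$, a compact subset of $T_xM$ by the SCLV definition, which together with $L(v)=-1$ forces $|v|\le C/(4\vez)$; and for such $v$ the segment $\{tv:0<t\le2\vez\}$ stays in a compact subset of $\wz U_x$ on which $\exp_x$, $\td\exp_x$ and $\psi$ are smooth, so $v\mapsto h_v(2\vez)$ and $v\mapsto\lambda_\psi(2\vez)$ are continuous and hence bounded on a relatively compact set. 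The care needed is to check that the closure of $\{v:\bt_{U_x}(v)>4\vez\}$ still lies in the region where these auxiliary functions admit continuous extensions, which one deduces from the star-shapedness of $\wz U_x$ and from $\exp_x$ being a diffeomorphism on it.
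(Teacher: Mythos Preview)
Your argument is correct and rests on the same key observation as the paper: the function $t\mapsto \log h_v(t)+\tfrac{c}{2}t^2$ is concave, which you phrase as $\lambda_\psi'\le -c$ and the paper phrases as $\bigl(\log[f(t)e^{ct^2/2}]\bigr)''\le 0$ (with $f=h_v$). The difference is only in how the linear bound $\log h_v(t)+\tfrac{c}{2}t^2\le C_0 t$ is extracted from concavity. The paper uses the three-point inequality at $\vez,2\vez,t$, drops the term coming from $2\vez$ by arranging $\log\bigl(f(2\vez)e^{2c\vez^2}\bigr)<0$ (this is where $\lim_{t\to 0}f(t)=0$ and ``$\vez$ sufficiently small'' are invoked), and obtains $C_0=-\vez^{-1}\log\bigl(f(\vez)e^{c\vez^2/2}\bigr)$. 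You instead bound the concave function by its tangent line at $2\vez$, which produces a $C_0$ involving both $h_v(2\vez)$ and $\lambda_\psi(2\vez)$. The two devices are essentially interchangeable: the secant slope between $\vez$ and $2\vez$ controls the derivative at $2\vez$ and vice versa.

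One point worth noting: you locate the role of ``sufficiently small $\vez$'' in the compactness step, but your boundedness argument for $\{v\in\wz U_1:\bt_{U_x}(v)>4\vez\}$ works for every $\vez>0$, so your route does not actually need smallness beyond ensuring $C_0>0$ by fiat. In the paper's route smallness is genuinely used to force $f(2\vez)e^{2c\vez^2}<1$. Conversely, the paper does not discuss uniformity of $C_0$ in $v$ at all, whereas you flag and sketch the required compactness argument; that extra care is appropriate, and the residual boundary issue you mention (continuous extension of $h_v(2\vez)$, $\lambda_\psi(2\vez)$ to the closure) is the same one the paper would need for its $f(\vez)$, $f(2\vez)$.
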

\begin{proof}
Let $x\in M$ and $v\in \wz U_1$. Define $\eta$ and $\sA$
as in the above proof. Let $f(t):=e^{-\psi_\eta(t)}\det\sA(t)$
and $h(t):=(\det\sA(t))^{1/n}$. Then, by again the Bishop inequality,
see \cite[(5.4)]{LMO},
\begin{align*}
\lf(\log\lf[f(t)e^{ct^2/2}\r]\r)''
&=\lf[-\psi_\eta(t)+\frac{ct^2}2+\log(\det\sA(t))\r]''\\
&=\lf[-\psi_\eta'(t)+ct+\frac{nh'(t)}{h(t)}\r]'\\
&=-\psi_\eta''(t)+c+n\frac{h''(t)h(t)-[h'(t)]^2}{[h(t)]^2}\\
&\le-\psi_\eta''(t)+c-\Ric(\dot\eta(t))=-\Ric_\infty(\dot\eta(t))+c\le0.
\end{align*}
Therefore, for all $t>4\vez>0$,
\begin{align}\label{eq:logfe}
\log(f(2\vez)e^{2c\vez^2})\ge
\frac{t-2\vez}{t-\vez}\log\lf(f(\vez)e^{\frac{c\vez^2}2}\r)
+\frac{\vez}{t-\vez}\log\lf(f(t)e^{\frac{ct^2}2}\r).
\end{align}
Since $\lim_{t\to0}f(t)=0$, there exists $\delta$ such that,
for all $\vez\in(0,\delta)$, we have $\log(f(2\vez)e^{2c\vez^2})<0$.
For such $\vez>0$, we can deduce from \eqref{eq:logfe} that
\begin{align*}
\log\lf(f(t)e^{ct^2/2}\r)
\le-\frac{(t-2\vez)\log\lf(f(\vez)e^{c\vez^2/2}\r)}{t\vez}t
\le-\frac{\log\lf(f(\vez)e^{c\vez^2/2}\r)}{\vez}t
=:C_0t.
\end{align*}
Finally, from the above estimates, we get that
\begin{align*}
\rho(\fB_U^+(x,r))&\le\rho(\fB_U^+(x,4\vez))+\int_{\wz U_1}\int_{4\vez}^r
f(t)\td t\td\sigma(v)\\
&\le\rho(\fB_U^+(x,4\vez))+\sigma\lf(\wz U_1\r)\int_{4\vez}^r
e^{C_0t-\frac{ct^2}2}\td t,
\end{align*}
which is exactly what we need.
$\qedd$
\end{proof}%

{\bf Acknowledgement.} The author would like to express
his deep thanks to Professor Shin-ichi OHTA for a careful reading of
an earlier version of this paper and for making a number of useful
suggestions and comments, including Remark \ref{rm:vc}.

{\small

}

\end{document}